\newcommand{\̇}{\relax\ifmmode\dot\else\.\fi}
\def\̂ {\hat}
\def\́ {\acute}
\def\̄ {\overline}
\def\̃ {\tilde}
\def\͠{\widetilde}
\newcommand{\eps}{\varepsilon}
\newcommand{\atpage}[2][at~page~]{\hyperref[#2]{#1\pageref*{#2}}}
\newcommand{\draftmark}{\small {\upshape\bfseries Draft version} from \csname @date\endcsname}
\newtheorem*{maintheorem}{Main Theorem}
\newcommand{\orcid}[1]{ORCID ID: \href{http://orcid.org/#1}{#1}}
\newcommand{\MRauthor}[1]{MR Author ID: \href{http://www.ams.org/mathscinet/search/author.html?mrauthid=#1}{#1}}
\newcommand{\mynewtheorem}[1]{
\newtheorem{#1}{\tl_upper_case:n #1}
\newtheorem*{#1*}{\tl_upper_case:n #1}
\expandafter\edef\csname #1autorefname\endcsname{\tl_upper_case:n #1}
}
\theoremstyle{remark}
\theoremstyle{definition}
\newlist{tcases}{description}{1}
\newcounter{tcasesi}
\renewcommand{\thetcasesi}{\Roman{tcasesi}}
\setlist[tcases,1]{before={\setcounter{tcasesi}{0}},font=\grefstepcounter{tcasesi}\makeatother\textbf{Case \thetcasesi: }\emph}
\newcommand{\grefstepcounter}[1]{\refstepcounter{#1}\xdef\@currentlabel{\@currentlabel}}
\title{Cheap Complex Limit Cycles}
\author{
    N. Goncharuk%
    \thanks{Cornell University, College of Arts and Sciences, Department of Mathematics, 310 Mallot Hall, Ithaca, NY, 14853, US}
    \thanks{Higher School of Economics, Department of Mathematics, 20 Myasnitskaya street, Moscow 101000, Russia}
    \thanks{Research supported by RFBR project 16-01-00748-a}
    \thanks{\orcid{0000-0002-4270-0510}, \MRauthor{978548}}
    \and
    Yu. Kudryashov%
    \footnotemark[1]
    \footnotemark[2]
    \footnotemark[3]
    \thanks{\orcid{0000-0003-4286-9276}, \MRauthor{914251}}
}
\tikzset{
    every picture/.style={
        x=22mm,
        y=3cm,
        z={(-3mm,-8mm)},
        thick,
        every node/.append style={fill/.default=white},
        beta/.style={semithick,dash dot},
        invisible/.style={very thin,dashed},
        horizontal arc/.style={
            x radius=22mm,
            y radius=5mm
        },
        U'/.style={scale=0.6},
        bottom arc/.style={
            start angle=-180,
            end angle=0,
            horizontal arc
        },
        upper arc/.style={
            start angle=0,
            end angle=180,
            horizontal arc
        }
    }
}
\def\drawU;{
    \foreach \yy in {-1,0,1}{
        \draw (-1, \yy) arc[bottom arc];
    }
    \draw[invisible]
        (1, 0) arc[upper arc]
        (1, -1) arc[upper arc]
        ;
    \draw (1, 1) arc[upper arc];
    \node[right] at (-1, -0.7) {$U$};

    \coordinate (A) at (-1.5, 0, 1);
    \coordinate (B) at (-1.5, 0, -1);
    \coordinate (Am1) at (A -| {-1, 0});
    \coordinate (Ap1) at (A -| {1,0});
    \coordinate (Bm1) at (B -| {-1, 0});
    \coordinate (Bp1) at (B -| {1, 0});
    \coordinate (R) at (2, 0);
    \coordinate (AR) at (A -| R);
    \coordinate (BR) at (B -| {1.5,0});
    \coordinate (O) at (1, 0);

    \draw
        (-1, 0) -- (-1, 1)
        (1, 0) -- (1, 1)
        (-1, -1) -- (Am1)
        (1, -1) -- (Ap1)
        ;
    \draw
        (AR) -- (A) -- (B) -- (Bm1)
        (Bp1) -- (BR)
        ;
    \draw[invisible]
        (Am1) -- (-1, 0)
        (Ap1) -- (1, 0)
        (Bm1) -- (Bp1)
        ;

        \node[above right=2mm of A] {$L$};
        \fill
            (0, 0) circle (2pt)
            (O) circle (2pt) node[below right] {$(1, 0)$}
            (Q) circle (2pt) node[left] {$q$};
}
\begin{document}
\maketitle
\begin{abstract}
    Consider a holomorphic foliation with singularities of a $2$-dimensional complex manifold.
    In this article we prove a new sufficient condition for this foliation to have countably many homologically independent complex limit cycles.
    In particular, if all leaves of a foliation are dense in the phase space, and it has a complex hyperbolic singular point, then it has infinitely many homologically independent complex limit cycles.
\end{abstract}

\section{Introduction}
\label{sec:intro}
The second part of Hilbert's 16th problem asks about the number and location of limit cycles of a polynomial vector field in ${\mathbb R}^2$.
In 1950s, I.~Petrovski and E.~Landis proposed \cite{PL55,PL57} to solve this problem by extending the vector field to the complex domain.
They conjectured that a complex polynomial vector field in ${\mathbb C}^2$ has finitely many complex limit cycles satisfying some additional assumptions.
This conjecture turned out to be false.
In \cite{Il69-genus:en} Yu. Ilyashenko provided an example of a complex foliation that has infinitely many complex limit cycles satisfying these assumptions.
Then he discovered \cite{Il78:en} that a generic complex polynomial vector field in ${\mathbb C}^2$ of a given degree $n\geq 2$ has infinitely many complex limit cycles.
For a~more detailed review see \cite{Il02}.
This article contains a simpler construction of complex limit cycles.

Our construction works in more general settings.
Let ${\mathcal F}$ be a holomorphic foliation with singularities of a 2-dimensional complex manifold~${\mathcal M}$.
Recall a few definitions.
\begin{definition}
    Let $L$ be a leaf of ${\mathcal F}$.
    A non-trivial free homotopy class $[\gamma ]$, $\gamma :S^1\to L$, is called a \emph{complex limit cycle} if the holonomy along its representative $\gamma $ is non-identical, and is called an \emph{identical cycle} otherwise.
\end{definition}
The holonomy maps along different representatives are conjugated, so it is not important which representative we choose.
\begin{definition}
    A~set of complex limit cycles of a~foliation ${\mathcal F}$ is called \emph{homologically independent}, if for any leaf $L$ all the cycles located on this leaf are linearly independent in $H_1(L)$.
\end{definition}
\begin{definition}
     A~singular point~$P$ of a vector field is called \emph{complex hyperbolic}, if its linearization has two non-zero eigenvalues, and their ratio $\lambda $ is not real.
 \end{definition}
Poincaré Normalization Theorem (see, e.g., \cite[Theorem 5.5, p.~62]{IYbook}) implies that a vector field is linearizable in some neighborhood of its complex hyperbolic singular point.
In particular, the corresponding foliation is given by $wdz=\lambda zdw$ in appropriate local coordinates $(z, w)$.

\begin{maintheorem}
    \phantomsection
    \expandafter\def\csname @currentlabelname\endcsname{Main Theorem}
    \label{thm:main}
    Let ${\mathcal F}$ be a holomorphic foliation with singularities of a complex 2-dimensional manifold~${\mathcal M}$.
    Suppose that it has a complex hyperbolic singularity at $P$.
    Let $U\ni P$ be an open bidisc in a linearization chart for ${\mathcal F}$ near $P$.

    Let $L$ and $L'$ be the local separatrices of~$P$ in~$U$.
    Let ${\hat L}$ be the leaf that includes~$L$.
    Suppose that ${\hat L}$ returns to $U$ not (or not only) along $L'$, i.e. ${\hat L}\cap U\nsubseteq L\cup L'$.
    Then ${\mathcal F}$ possesses a countable set of homologically independent complex limit cycles.
\end{maintheorem}
\begin{remark}
    The local leaves of all points of $U$ have $P$ in their closures, hence the assumptions of the \nameref{thm:main} do not depend on a particular choice of $U$.
\end{remark}

This theorem is mainly motivated by the study of polynomial foliations of ${\mathbb C}^2$.
We shall list corollaries of \nameref{thm:main} related to this study, and review the state of the art in this field in \autoref{sec:cor} below.
The next corollary immediately follows from the \nameref{thm:main}.
\begin{corollary}
    \label{cor:general}
    Let ${\mathcal F}$ be a holomorphic foliation with singularities of a complex $2$-dimensional manifold ${\mathcal M}$.
    Suppose that all leaves of ${\mathcal F}$ are dense in ${\mathcal M}$, and ${\mathcal F}$ possesses a complex hyperbolic singular point.
    Then ${\mathcal F}$ possesses a countable set of homologically independent complex limit cycles.
\end{corollary}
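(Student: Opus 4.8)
The plan is to obtain this as an immediate consequence of the \nameref{thm:main}: under the density hypothesis the only substantive assumption of that theorem, the return condition $\hat L \cap U \nsubseteq L \cup L'$, holds automatically. Since $\mathcal F$ has a complex hyperbolic singular point $P$, I would fix a linearization chart near $P$ together with an open bidisc $U \ni P$ inside it, exactly as in the \nameref{thm:main}. In the normal-form coordinates $(z,w)$ with $w\,dz = \lambda z\,dw$ the two local separatrices $L$ and $L'$ are the traces in $U$ of the coordinate axes $\{z = 0\}$ and $\{w = 0\}$, and I let $\hat L$ denote the leaf containing $L$.

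The key observation is that $L \cup L' = U \cap \{zw = 0\}$ is a proper analytic subset of the four-real-dimensional bidisc $U$, hence closed in $U$ and nowhere dense; therefore $V := U \setminus (L \cup L')$ is a non-empty subset that is open in $U$, and so open in $\mathcal M$. Applying the density hypothesis to the leaf $\hat L$ gives $\overline{\hat L} = \mathcal M$, so $\hat L$ meets every non-empty open set, in particular $V$. Any point $x \in \hat L \cap V$ then lies in $\hat L \cap U$ but not in $L \cup L'$, which is exactly the required return condition.

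With the hypotheses of the \nameref{thm:main} verified, its conclusion furnishes the desired countable set of homologically independent complex limit cycles. I do not expect a genuine obstacle: the entire content of the deduction is the elementary fact that the separatrix pair is nowhere dense inside $U$, so a dense leaf is forced to return to $U$ away from the separatrices.
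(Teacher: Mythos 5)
Your proposal is correct and follows exactly the paper's own deduction: fix the bidisc $U$ in a linearization chart, observe that the separatrix pair $L\cup L'$ is nowhere dense so $U\setminus(L\cup L')$ is a non-empty open set, and use density of $\hat L$ to verify the return condition of the \nameref{thm:main}. The paper's proof is just a terser version of the same argument, so there is nothing to add or correct.
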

\begin{proof}
    Let $P$ be a complex hyperbolic singular point of ${\mathcal F}$.
    Let $U$, $L$, $L'$, ${\hat L}$ be as in the \nameref{thm:main}.
    Since ${\hat L}$ is dense in ${\mathcal M}$, it visits the open set $U\setminus (L\cup L')$.
    Hence, the \nameref{thm:main} implies that ${\mathcal F}$ possesses infinitely many homologically independent complex limit cycles.
\end{proof}

\section[Corollaries of the \nameref{thm:main}]{Corollaries of the \nameref{thm:main} concerning foliations of ${\mathbb C}^2$ and ${\mathbb C}P^2$}
\label{sec:cor}
\subsection{Classes \texorpdfstring{${\mathcal A}_n$}{𝒜ₙ} and \texorpdfstring{${\mathcal B}_n$}{ℬₙ}}
Let ${\mathcal A}_n$ be the class of foliations of ${\mathbb C}^2$ given by polynomial vector fields of degree at most $n$.
Let ${\mathcal B}_n$ be the class of foliations of ${\mathbb C}P^2$ given by a polynomial vector field of degree at most $n$ in each affine chart.
Geometrically, ${\mathcal B}_n$ is the class of foliations ${\mathcal F}$ of ${\mathbb C}P^2$ such that ${\mathcal F}$ has $n-1$ tangencies to a generic line $l\subset {\mathbb C}P^2$.

A foliation ${\mathcal F}\in {\mathcal A}_n$ can be extended to a holomorphic foliation of ${\mathbb C}P^2$.
Direct computation (see, e.g., \cite[Section 25A]{IYbook}) shows that the extension has degree at most $n+1$ in any other affine chart, so ${\mathcal A}_{n-1}\subset {\mathcal B}_n\subset {\mathcal A}_n$.
For a generic ${\mathcal F}\in {\mathcal A}_n$, the line $L_\infty $ at infinity is an algebraic leaf of the extended foliation, and it contains $n+1$ singular points.

Generic foliations of class ${\mathcal A}_n$ are well-studied;
the main tool here is the pseudogroup of monodromy maps along loops $\gamma \subset L_\infty $.
In particular, a generic foliation ${\mathcal F}\in {\mathcal A}_n$ possesses infinitely many complex limit cycles, and all leaves of ${\mathcal F}$ are dense in ${\mathbb C}^2$.
For more details, see \autoref{sub:An} below.

Analogous questions about a generic foliation of class ${\mathcal B}_n$ are open.

\subsection{Polynomial foliations of \texorpdfstring{${\mathbb C}P^2$}{ℂP²}}
Consider the class ${\mathcal B}_n$ of polynomial foliations of ${\mathbb C}P^2$ of projective degree $n-1$ defined above.
Though for a generic foliation ${\mathcal F}\in {\mathcal B}_n$, $n>2$, it is not known whether ${\mathcal F}$ possesses infinitely many complex limit cycles, and whether its leaves are dense in ${\mathbb C}P^2$, these properties are established for some non-empty open subsets of ${\mathcal B}_n$.

In \cite{M75:en}, B.~Mjuller studied perturbations of a foliation ${\mathcal F}\in {\mathcal B}_{2n-1}$ having a rational first integral of degree $n$.
He proved that there exists a non-empty open subset of ${\mathcal B}_{2n-1}$, $n\geq 2$, such that for each ${\mathcal F}$ from this subset, all its leaves except at most one are dense in ${\mathbb C}P^2$.

In \cite{LR03}, F.~Loray and J.~Rebelo established density of the leaves (and more properties, including ergodicity) for a foliation of ${\mathbb C}P^m$, $m>1$, close to a foliation with a specially crafted algebraic leaf.
In case $m=2$, their arguments work for an open subset ${\mathcal U}\subset {\mathcal B}_n$, $n>2$, such that the closure of ${\mathcal U}$ in ${\mathcal B}_n$ includes ${\mathcal A}_{n-1}$.

In \cite{GK-BLC}, the authors constructed an open subset ${\mathcal U}\subset {\mathcal B}_n$, $n>2$, such that ${\mathcal U}$ intersects ${\mathcal A}_{n-1}$ on a dense subset, and each foliation ${\mathcal F}\in {\mathcal U}$ possesses infinitely many homologically independent complex limit cycles.

\nameref{thm:main} allows us to construct more open subsets of ${\mathcal B}_n$ such that foliations from these subsets possess infinitely many homologically independent complex limit cycles.
In particular, \autoref{cor:general} implies that foliations from the set constructed in \cite{LR03,M75:en} satisfy assumptions of \nameref{thm:main}.

The following corollary provides us another way to construct such open set.
\begin{corollary}
    \label{cor:close-to-sep-con}
    Consider a foliation ${\mathcal F}_0\in {\mathcal B}_n$ with a separatrix connection between two complex hyperbolic singular points $P_0$ and $Q_0$. Let ${\mathcal F}\in {\mathcal B}_n$ be its perturbation such that this separatrix connection is broken: ${\mathcal F}$ has singular points $P$ and $Q$ close to  $P_0$ and $Q_0$, but no  separatrix connection close to that of ${\mathcal F}_0$.
    
    Then ${\mathcal F}$ satisfies the assumptions of the \nameref{thm:main}, hence ${\mathcal F}$ possesses infinitely many homologically independent complex limit cycles.
\end{corollary}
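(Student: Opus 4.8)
The plan is to verify the hypothesis of the \nameref{thm:main} at the point $P$; its conclusion then gives the claim directly. Since complex hyperbolicity is an open condition, $P$ and $Q$ are still complex hyperbolic, so by the Poincaré Normalization Theorem I may fix bidiscs $U\ni P$ and $V\ni Q$ in linearization charts, with local separatrices $L,L'$ of $P$ and $A,B$ of $Q$. After relabeling I may assume that in $\mathcal F_0$ the connection $S$ joins $L$ to $A$, i.e. $S$ leaves $P_0$ along $L$ and enters $Q_0$ along $A$. I fix a transversal $\Sigma_Q\subset\{|w|=r\}\subset\partial V$ to $A$, coordinatized by the linearizing coordinate $z$ so that $A\cap\Sigma_Q=\{z=0\}$, and a transversal $\Sigma_P\subset U$ to $L$. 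Everything reduces to showing that $\hat L\supseteq L$ meets $U$ off $L\cup L'$.

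First I would follow $\hat L$ out of $P$. Since the perturbation is small and $S$ is a compact arc, the continuation of $L$ shadows $S$ up to $V$ and crosses $\Sigma_Q$ at a point $z_1$; moreover $z_1\neq 0$, since $z_1=0$ would put $A$ on the leaf $\hat L$, i.e. produce a separatrix connection close to that of $\mathcal F_0$, contrary to hypothesis. In the linearizing chart of $Q$ the leaf through $z_1$ is $z=Cw^{\lambda}$ with $C\neq0$, where $\lambda=\lambda_Q\notin\mathbb R$ is the eigenvalue ratio at $Q$. The holonomy of $A$ on $\Sigma_Q$ is the linear map $z\mapsto\mu z$ with $\mu=e^{2\pi i\lambda}$, so $\hat L$ meets $\Sigma_Q$ at the points $\mu^{k}z_1$, $k\in\mathbb Z$. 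This is the crux: because $Q$ is complex hyperbolic, $|\mu|=e^{-2\pi\,\mathrm{Im}\,\lambda}\neq1$, so, choosing the sign of $k$ for which $|\mu|^{k}\to0$, the points $\mu^{k}z_1\to0$; that is, $\hat L$ accumulates onto $A$ along the outer boundary $\{|w|=r\}$. Each such sheet crosses $\partial V$ transversally near $A$ and so continues into the region $|w|>r$ at a level $z=\mu^{k}z_1\to0$.

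Finally I would transport these exits back to $P$. The foliation is regular along the compact arc $S\setminus\{P,Q\}$, so the holonomy $h$ from $\Sigma_Q$ to $\Sigma_P$ along $S$ is a germ of biholomorphism with $h(0)=L\cap\Sigma_P$. For large $k$ the sheet leaving $V$ at $\mu^{k}z_1$ stays $O(\mu^{k}z_1)$-close to $S$ over the whole arc, hence re-enters $U$ and meets $\Sigma_P$ at $h(\mu^{k}z_1)$. Since $h(\mu^{k}z_1)\to h(0)=L\cap\Sigma_P$ while $\mu^{k}z_1\neq0$, these are infinitely many distinct points of $\hat L\cap U$ lying near $L$ but off $L$ (as $\mu^{k}z_1\neq0$) and off $L'$ (as they cluster at $L$). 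Thus $\hat L\cap U\nsubseteq L\cup L'$, and the \nameref{thm:main} produces the desired countable homologically independent family.

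The step I expect to be the main obstacle is the local claim that the non-separatrix leaf $\hat L$ genuinely exits $V$ near the \emph{incoming} separatrix $A$, and does not merely transit to $B$: one must verify that the accumulation onto $A$ occurs at the outer radius $|w|=r$, where $\hat L$ crosses $\partial V$ transversally, so that its continuation shadows $S$ back toward $P$ rather than plunging into $Q$. Hand in hand with this, one must make the shadowing estimate uniform along the compact connection $S$, so that for all large $k$ the returns really land inside $U$.
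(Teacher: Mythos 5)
Your strategy is sound and genuinely different from the paper's, but one step is false as written: the claim that the holonomy $h\colon\Sigma_Q\to\Sigma_P$ along $S$ satisfies $h(0)=L\cap\Sigma_P$. For the perturbed foliation, $h(0)$ is by definition the point where the leaf through $A$ meets $\Sigma_P$, so the equality $h(0)=L\cap\Sigma_P$ says precisely that $A$ and $L$ lie on one leaf, i.e.\ that the connection is \emph{unbroken}; it contradicts the hypothesis, and it also contradicts your own correct observation that $z_1\neq 0$, since $h(z_1)=L\cap\Sigma_P$ by the definition of $z_1$ and $h$ is injective. The argument survives with a small repair: because $h$ is injective and $h(z_1)=L\cap\Sigma_P$, the points $h(\mu^k z_1)$, $k\geq 1$, are pairwise distinct and all different from $L\cap\Sigma_P$, hence off $L$ (a small transversal $\Sigma_P$ meets $L$ only at that one point); and they are off $L'$ simply because $\Sigma_P$, chosen at a point of $L$ away from $P$, is disjoint from $L'$ --- no clustering argument, and in fact no limit $k\to\infty$, is needed. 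The uniform-shadowing issue you flag at the end is handled in the standard way: the holonomy of $\mathcal F$ along the compact arc of $S$ between the two cross-sections is defined on a disc of size uniform in the perturbation around $S\cap\Sigma_Q$, and the points $\mu^k z_1$ eventually lie in that disc since they tend to $0=A\cap\Sigma_Q$, which is close to $S\cap\Sigma_Q$.

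For comparison, the paper works on a single cross-section $T$ to the connection near $P_0$ and never invokes the linearization at $Q$: it takes the holonomies $M_\alpha$ (one turn around $P$) and $M_\beta$ (one turn around $Q$ along the connection), chooses orientations making both contractions, and notes that their unique fixed points $O_P=L_P\cap T$ and $O_Q=\hat L_Q\cap T$ are distinct exactly because the connection is broken; then the single point $M_\beta(O_P)\neq O_P$ lies on the leaf of $L_P$, inside $T$, off both separatrices. In your notation this point is essentially $h(\mu z_1)$, i.e.\ your $k=1$ point: the contraction/fixed-point packaging replaces your explicit computation of the linear holonomy at $Q$ and the limit $\mu^k z_1\to 0$, and it localizes all domain-of-definition questions to a neighborhood of $O$ where the perturbed holonomies automatically persist. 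Your version buys an explicit infinite family of returns of $\hat L$ accumulating near $L$, but the \nameref{thm:main} needs only one such point.
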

\begin{remark}
    We do not claim that for ${\mathcal F}_0\in {\mathcal B}_n$ with a separatrix connection, there exists ${\mathcal F}\in {\mathcal B}_n$ close to ${\mathcal F}_0$ that destroys this connection.
    We only claim that \emph{if} it exists, \emph{then} it satisfies the assumptions of \nameref{thm:main}.
\end{remark}
\begin{proof}
    Let ${\hat L}_0$ be the common separatrix of $P_0$ and $Q_0$.
    Fix a cross-section $T$ to ${\hat L}_0$, $T\cap {\hat L}_0=\set{O}$, near $P_0$, and two leafwise loops starting at $O$, $\alpha _0$ making one turn around $P$, and $\beta _0$ making one turn around $Q_0$.
    These loops define two holonomy maps $M_{\alpha _0}, M_{\beta _0}:(T, O)\to (T, O)$.
    Since $P_0$ and $Q_0$ are complex hyperbolic, we can choose the orientation of $\alpha _0$ and $\beta _0$ such that both of these holonomy maps contract to $O$.

    After a perturbation breaking the separatrix connection, singular points $P$ and $Q$ have local separatrices $L_P$ and $L_Q$, respectively.
    The holonomy maps along leafwise curves $\alpha $, $\beta $ close to $\alpha _0$ and $\beta _0$ define two holonomy maps $M_\alpha , M_\beta :T\to T$.
    These holonomy maps are defined in some neighborhoods of $O$, and contract to their unique fixed points $O_P\in L_P\cap T$ and $O_Q\in {\hat L}_Q\cap T$, where ${\hat L}_Q$ is the leaf of ${\mathcal F}$ including $L_Q$.
    Since the separatrix connection is broken, $O_P\neq O_Q$.
    Hence $M_\beta (O_P)\neq O_P$.

    Finally, $M_\beta (O_P)$ belongs to the leaf ${\hat L}_P$ that includes $L_P$, and $M_\beta (O_P)$ does not belong to the local separatrices of $P$, thus ${\mathcal F}$ satisfies the assumptions of the \nameref{thm:main}.
\end{proof}

\subsection{Polynomial foliations of \texorpdfstring{${\mathbb C}^2$}{ℂ²}}
\label{sub:An}
Consider a generic foliation ${\mathcal F}\in {\mathcal A}_n$.
Recall that ${\mathcal F}$ can be extended to a holomorphic foliation of ${\mathbb C}P^2$, and the line $L_\infty $ at infinity is an algebraic leaf of the extension.
Let $a_j$, $j=1,\dots ,n+1$ be the singular points of the extended foliation on $L_\infty $.
Denote by $\lambda _j$ the ratio of the eigenvalues of $a_j$, the one corresponding to $L_\infty $ is in the denominator.

The following theorem was initially proved by Petrovski and Landis \cite{PL55}.
Later some gaps were sealed in \cite{Pet96}, see also textbook \cite[Theorem 25.56]{IYbook}.
\begin{theorem}
    [{\cite{PL55,Pet96,IYbook}}]
    \label{thm:no-alg}
    For $n\geq 2$, a generic foliation ${\mathcal F}\in {\mathcal A}_n$ has no algebraic leaves besides the line at infinity.
    The exceptional set is included by a real algebraic subset of codimension one.
\end{theorem}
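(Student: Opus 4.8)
The plan is to translate the existence of an extra algebraic leaf into a property of the holonomy (monodromy) pseudogroup of the line at infinity, and then to show that this property fails for generic coefficients.

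First I would extend ${\mathcal F}$ to ${\mathbb C}P^2$ and work near $L_\infty$, which for generic ${\mathcal F}\in{\mathcal A}_n$ is an algebraic leaf carrying the $n+1$ singular points $a_j$. Suppose $\Gamma\neq L_\infty$ is another algebraic leaf. Since two distinct leaves can meet only at singular points, $\Gamma\cap L_\infty$ is a nonempty finite subset of $\set{a_1,\dots,a_{n+1}}$, and near such an $a_j$ the curve $\Gamma$ is the local separatrix transverse to $L_\infty$. Fix a small holomorphic transversal $T$ to $L_\infty$ at a regular point $q$ close to $a_j$. Because $\Gamma$ accumulates on $a_j$ while differing from $L_\infty$, the intersection $\Gamma\cap T$ is finite, nonempty, and contains a point other than $q=T\cap L_\infty$. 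Following leaves along loops in $L_\infty\setminus\set{a_j}$ maps $\Gamma\cap T$ to itself, so the monodromy pseudogroup $G$ of $L_\infty$ (in the sense of \cite[Section 25A]{IYbook}) acquires a nontrivial finite invariant set. Thus it suffices to prove that for generic ${\mathcal F}$ the pseudogroup $G$ has no finite orbit away from its fixed point $q$.

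Next I would analyze $G$ itself. It is generated by the local holonomies $f_1,\dots,f_{n+1}$, where $f_j\colon(T,q)\to(T,q)$ is a germ fixing $q$ with multiplier $\mu_j=\exp(2\pi i\lambda_j)$; complex hyperbolicity of $a_j$ is exactly $|\mu_j|\neq 1$, and the $f_j$ satisfy one relation coming from a contractible product of the generating loops. For generic coefficients all $a_j$ are complex hyperbolic, the ratios $\lambda_j$ are nonresonant and in general position, and the higher jets of the $f_j$ are generic; in particular $G$ is non-solvable, since solvable subgroups of $\operatorname{Diff}({\mathbb C},0)$ are analytically very special. I would then invoke the density theorem for non-solvable subgroups of $\operatorname{Diff}({\mathbb C},0)$ (the circle of ideas behind \cite{LR03}): every pseudo-orbit of such a group is dense in a punctured neighborhood of $q$. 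A dense orbit cannot be finite, which contradicts the invariant set produced above and rules out $\Gamma$.

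It remains to control the exceptional set. Each obstruction --- some $\lambda_j$ being real (so $a_j$ is not complex hyperbolic), the degeneration that would make $G$ solvable, and the finitely many resonance relations among the $\lambda_j$ --- is cut out by the vanishing of a real-algebraic function of the coefficients, since $\lambda_j$ and the jets of the $f_j$ depend algebraically on ${\mathcal F}\in{\mathcal A}_n\cong{\mathbb C}^N$; the dominant stratum, where some $\lambda_j\in{\mathbb R}$, already has real codimension one. The hard part, and the place where the original argument of Petrovski and Landis was incomplete (later repaired in \cite{Pet96}), is precisely this last bookkeeping: passing from ``each bad configuration is thin'' to ``all of them lie in a single real-algebraic set of codimension one.'' This requires an a priori bound on the degree of a putative algebraic leaf, equivalently a uniform control of $G$, so that the a priori countable family of resonance conditions collapses to finitely many real-algebraic equations; and it requires globalizing the local density statement, since the $f_j$ are only germs while $\Gamma\cap T$ lives on a transversal of definite size. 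I would expect the degree bound and this globalization to absorb the bulk of the work.
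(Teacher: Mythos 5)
You are attempting a result that the paper itself does not prove: \autoref{thm:no-alg} is quoted with references \cite{PL55,Pet96,IYbook}, so the comparison below is with the classical argument in those references. Your proposal has a fatal gap at its very first step: the finite invariant set you want to feed into the density theorem does not exist where you need it. An algebraic leaf $\Gamma\neq L_\infty$ is a \emph{closed} algebraic curve in ${\mathbb C}P^2$, and two invariant curves can meet only at singular points of the foliation; hence $\Gamma\cap L_\infty\subset\{a_1,\dots,a_{n+1}\}$, and for any \emph{regular} point $q\in L_\infty$ we have $\mathrm{dist}(q,\Gamma)>0$. Consequently a sufficiently small transversal $T$ through $q$ satisfies $\Gamma\cap T=\emptyset$: near $a_j$ the curve $\Gamma$ coincides with the separatrix $\{z=0\}$ transverse to $L_\infty$, which misses every transversal $\{z=z_0\},\ |w|<\varepsilon$, and globally $\Gamma$ stays a definite distance away from $q$. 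So your claim that $\Gamma\cap T$ is ``finite, nonempty'' is false. Worse, since holonomy preserves $\Gamma$, no pseudo-orbit of a point of $\Gamma$ ever enters the ball around $q$ in which a Nakai-type density theorem holds, so the intended contradiction can never materialize, no matter how the ``globalization'' you defer is carried out. This also inverts the classical logical order: in the literature (e.g.\ \cite[Lemma 28.10]{IYbook}, used in this very paper), \emph{non}-algebraicity is the hypothesis that guarantees a leaf accumulates to regular points of $L_\infty$ and thus feeds the monodromy dynamics; density-type statements such as \autoref{thm:density} take the absence of algebraic leaves as input, so deriving \autoref{thm:no-alg} from orbit density is backwards, if not circular.

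There is a second structural problem: even granting the dynamical contradiction, your route cannot produce the stated exceptional set. The multiplier of the holonomy germ $f_j$ is $\exp(2\pi i\lambda_j)$, a transcendental --- not algebraic --- function of the coefficients of ${\mathcal F}$, and the same is true of the higher jets of $f_j$; so your assertion that these ``depend algebraically on ${\mathcal F}$'' fails, and the locus where the monodromy group is solvable (or resonant) is at best a countable union of real analytic sets. This is precisely why the monodromy-based theorems in this paper (\autoref{thm:density}, \autoref{thm:LC}) have real \emph{analytic} exceptional sets, whereas \autoref{thm:no-alg} asserts a real \emph{algebraic} one of codimension one. The classical proof is entirely different and far more elementary: if every $a_j$ is hyperbolic, an algebraic leaf must meet $L_\infty$ transversally along smooth separatrix branches at a subset of the $a_j$, which bounds its degree by $n+1$; Bezout and index-theorem (Camacho--Sad type) identities then force a rational --- in particular real --- relation among the characteristic numbers $\lambda_j$. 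Since only finitely many such relations are possible, the exceptional set lies in a finite union of real algebraic hypersurfaces (reality of certain combinations of the $\lambda_j$, which are algebraic in the coefficients), giving exactly the codimension-one real algebraic bound. No local dynamics enters at all.
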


The geometric properties of a generic foliation ${\mathcal F}\in {\mathcal A}_n$ are very different from those of a generic foliation of the real plane.
We shall discuss some of the properties below.
For more details, see the survey \cite{Shch06:en}.

In 1962, M.~Khudai--Verenov \cite{KhV62:en} proved that the leaves of a generic foliation ${\mathcal F}\in {\mathcal A}_n$, $n\geq 3$, are dense in ${\mathbb C}^2$.
Namely, he proved that all leaves of~a~foliation ${\mathcal F}\in {\mathcal A}_n$ are dense, provided that ${\mathcal F}$ has no algebraic leaves, and $\lambda _j$ generate a dense subgroup in ${\mathbb C}$.
So, the exceptional set in this article has measure zero, but is dense in ${\mathcal A}_n$.

In 1978, Yu.~Ilyashenko \cite{Il78:en} proved that it is enough to require that $\lambda _j$ generate a dense subgroup in ${\mathbb C}/(2\pi i{\mathbb Z})$, so his proof works for $n=2$.
In 1984, A.~Shcherbakov \cite{Shch84:trans} proved a similar result for a thinner exceptional set, with the main assumption being the unsolvability of the monodromy group at infinity.
Also, in 1994 I.~Nakai provided another proof of the same fact.
\begin{theorem}
    [{\cite{KhV62:en,Il78:en,Shch84:trans,N94}}]
    \label{thm:density}
    For $n\geq 2$, all leaves of a generic foliation of class ${\mathcal A}_n$ are dense in ${\mathbb C}^2$.
    The exceptional set is included by a nowhere dense real analytic subset of codimension $1$.
\end{theorem}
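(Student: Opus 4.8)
The plan is to reduce density of leaves to density of orbits of the \emph{monodromy group at infinity}, prove the latter, and then control the exceptional locus. Fix a generic $\mathcal{F}\in\mathcal{A}_n$ and extend it to $\mathbb{C}P^2$, so that $L_\infty$ is an algebraic leaf carrying the $n+1$ singular points $a_1,\dots,a_{n+1}$ with eigenvalue ratios $\lambda_j$. Choose a nonsingular point $O\in L_\infty$ and a small holomorphic cross-section $T$ with $T\cap L_\infty=\set{O}$. Every loop in $L_\infty\setminus\{a_1,\dots,a_{n+1}\}$ based at $O$ induces a holonomy germ on $(T,O)$; the resulting group $G=\langle f_1,\dots,f_{n+1}\rangle\subset\mathrm{Diff}(\mathbb{C},0)$, where $f_j$ is the holonomy around $a_j$, is the monodromy group at infinity. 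Its generators satisfy $f_1\circ\dots\circ f_{n+1}=\mathrm{id}$, and the multiplier of $f_j$ at $O$ is $\exp(2\pi i\lambda_j)$.

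First I would establish the reduction. For a foliation of class $\mathcal{A}_n$ whose only algebraic leaf is $L_\infty$ (available for generic $\mathcal{F}$ by \autoref{thm:no-alg}), every leaf $\Lambda\neq L_\infty$ is a noncompact analytic curve, hence unbounded, so its closure in $\mathbb{C}P^2$ meets $L_\infty$; using that $L_\infty$ is itself a leaf, the holonomy correspondence then shows that $\Lambda\cap T$ accumulates at $O$ and is a single $G$-orbit. The standard consequence (see \cite{IYbook}) is that $\Lambda$ is dense in $\mathbb{C}^2$ \emph{if and only if} the orbit $G\cdot p$ is dense in $T$, where $\{p\}=\Lambda\cap T$. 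Thus it suffices to prove that every $G$-orbit on $T$ is dense.

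For density of orbits I would exploit the two inputs supplied by the singular points of $L_\infty$. Since generically each $\lambda_j$ is non-real, $\lvert f_j'(O)\rvert=\exp(-2\pi\,\mathrm{Im}\,\lambda_j)\neq 1$, so after fixing orientations $G$ contains a \emph{contracting} germ $g$ with $0<\lvert g'(O)\rvert<1$. The linear parts of compositions of the $f_j$ and their inverses generate the subgroup of $\mathbb{C}^*$ spanned by the multipliers $\exp(2\pi i\lambda_j)$, which under the genericity hypothesis of \cite{KhV62:en,Il78:en} is dense in $\mathbb{C}^*$. In the linear approximation, the orbit of any nonzero point under this dense multiplier group is dense in $\mathbb{C}$. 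The scheme is then: given $p\neq O$, iterate $g$ to push $p$ deep into the linearizable region where every element of $G$ is well approximated by its linear part; apply an element of $G$ whose linear part realizes a prescribed scaling and rotation; and re-expand by powers of $g^{-1}$. Carrying a target $q$ through this contract--rotate--expand cycle shows that $G\cdot p$ comes within any $\varepsilon$ of $q$, hence is dense.

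The main obstacle is precisely the transfer from the linear model to the genuine nonlinear action: the nonlinear tails of the germs must be controlled uniformly throughout the contraction, rotation and re-expansion, so that the linear approximation does not degrade faster than the orbit spreads. Making this quantitative---that no nonlinear distortion can trap an orbit in a proper closed invariant subset---is the technical heart, accomplished in \cite{KhV62:en,Il78:en} through estimates on the linearizing coordinates and in \cite{Shch84:trans,N94} through the non-solvability of $G$, which is a more robust substitute for density of the multipliers. Finally, for the exceptional set I would combine \autoref{thm:no-alg} with the observation that the genericity conditions on the $\lambda_j$ (non-reality and, say, non-solvability of the monodromy group) fail only inside a nowhere dense real analytic subset of codimension one, giving the stated bound on the exceptional locus.
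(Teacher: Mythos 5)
The paper does not prove \autoref{thm:density} at all: it is quoted from the literature \cite{KhV62:en,Il78:en,Shch84:trans,N94}, and the surrounding text of \autoref{sub:An} is a historical survey of those proofs. So there is no in-paper argument to measure your proposal against; it can only be judged against the cited works, whose overall strategy you have reconstructed correctly (monodromy group at infinity, reduction of density of leaves to density of pseudogroup orbits on a cross-section, a contracting germ combined with dense multipliers, unsolvability as the more robust substitute).

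Judged as a proof, however, the proposal has genuine gaps, all concentrated exactly where you defer back to the references whose theorem you are supposed to be proving. First, the reduction needs that every non-algebraic leaf accumulates on \emph{nonsingular} points of $L_\infty$, so that its trace on $T$ is nonempty and accumulates to $O$; ``its closure in ${\mathbb C}P^2$ meets $L_\infty$'' only gives accumulation somewhere, possibly only at the singular points $a_j$, and upgrading this is a real lemma (cf. \cite[Lemma 28.10]{IYbook}), not a consequence of ``the holonomy correspondence.'' Relatedly, $\Lambda\cap T$ need not be a single $G$-orbit, since leafwise paths that travel far from $L_\infty$ need not be homotopic to paths projecting to loops in $L_\infty$ (this is harmless only because you aim to prove \emph{every} orbit dense). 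Second, the contract--rotate--expand scheme \emph{is} the theorem: the uniform control of nonlinear tails under iterated contraction and re-expansion is the entire content of \cite{KhV62:en,Il78:en}, and invoking those papers for it makes the argument circular. Third, the exceptional-set bound is not an ``observation'': the multiplier-density hypotheses of Khudai--Verenov and Ilyashenko fail on a \emph{dense} (though measure-zero) subset of ${\mathcal A}_n$ --- the paper stresses precisely this point --- so the nowhere dense real analytic codimension-one bound is available only through the unsolvability-based versions \cite{Shch84:trans,N94}, and showing that solvability of the monodromy group at infinity confines the foliation to such a subset is itself a nontrivial theorem, not a remark one can append at the end.
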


As we mentioned \hyperref[sec:intro]{above}, the study of limit cycles of polynomial foliations ${\mathcal F}\in {\mathcal A}_n$ was motivated by Petrovskii and Landis attempt to solve the second part of Hilbert's 16th problem.
In 1978, Yu. Ilyashenko proved \cite{Il78:en} that a generic foliation ${\mathcal F}\in {\mathcal A}_n$, $n\geq 3$, has infinitely many homologically independent complex limit cycles.
As in his version of \autoref{thm:density}, the main assumption was that the subgroup of ${\mathbb C}/2\pi i{\mathbb Z}$ generated by $\lambda _j$ is dense.
This result was reinvented in 1995 \cite{G-MW95}.

In 1984, A.~Shcherbakov \cite{Shch84:trans} announced a version of Ilyashenko's theorem with the main assumption replaced by unsolvability of the monodromy group at infinity.
The proof was published in 1998 \cite{SRO98}, and works for $n\geq 3$.

In the meantime, several authors \cite{N94,BLL97,W98} provided proofs of the fact that an unsolvable subgroup of $Aut({\mathbb C}, 0)$ possesses infinitely many attracting fixed points accumulating to the origin.
This fact is closely related to the study of limit cycles for the following reason.
\begin{remark}
    \label{rem:LC-fp}
    Let $T$ be a cross-section for a foliation ${\mathcal F}$ of a two-dimensional complex manifold ${\mathcal M}$.
    Let $\gamma :[0, 1]\to {\mathcal M}$ be a leafwise path with endpoints in $T$.
    Suppose that the holonomy map $M_\gamma :(T, \gamma (0))\to (T, \gamma (1))$ extends to a holonomy map $M_\gamma :T\to T$.
    Suppose that $M_\gamma $ has an isolated fixed point $p\in T$.
    Then the curve “implementing” the equality $M_\gamma (p)=p$ is a complex limit cycle with multiplier $M_\gamma '(p)$.
\end{remark}

Recently we provided \cite{GK-BLC} another proof of the main theorem of \cite{SRO98}.
Our proof works for $n=2$, and avoids lengthy estimates of integrals.
\begin{theorem}
    [{\cite{Il78:en,SRO98,GK-BLC}}]
    \label{thm:LC}
    For $n\geq 2$, a generic foliation of class ${\mathcal A}_n$ possesses infinitely many homologically independent complex limit cycles.
    The exceptional set is included by a nowhere dense real analytic submanifold of codimension two.
\end{theorem}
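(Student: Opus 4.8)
The plan is to translate the problem into the dynamics of the monodromy group at infinity and then read complex limit cycles off its fixed points via Remark~\ref{rem:LC-fp}. First I would extend a foliation $\mathcal{F}\in\mathcal{A}_n$ to $\mathbb{C}P^2$, so that the line at infinity $L_\infty$ becomes an algebraic leaf carrying $n+1$ complex hyperbolic singular points $a_1,\dots,a_{n+1}$ with eigenvalue ratios $\lambda_j$. Fixing a regular point $O\in L_\infty$ and a germ of cross-section $(T,O)\cong(\mathbb{C},0)$, the holonomies $g_j$ along small loops around the $a_j$ generate the monodromy group $G=\langle g_1,\dots,g_{n+1}\rangle\subset\mathrm{Aut}(\mathbb{C},0)$, subject to the single relation $g_1\cdots g_{n+1}=\mathrm{id}$ inherited from $\pi_1(L_\infty\setminus\{a_1,\dots,a_{n+1}\})$; the linear part of $g_j$ is $e^{2\pi i\lambda_j}$. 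Because $n+1\geq 3$, the group has enough generators to be non-solvable.

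The heart of the argument is the genericity claim: $G$ is non-solvable for every $\mathcal{F}$ outside an exceptional set $\Sigma$ of real codimension two. I would deduce this from the classification of solvable subgroups of $\mathrm{Diff}(\mathbb{C},0)$, by which such a group is conjugate into one of a short list of metabelian model groups; membership in a model is cut out by the vanishing of finitely many invariants of the nonlinear jets of the $g_j$, for instance the obstructions to embedding all $g_j$ into the flow of a single formal vector field. The decisive point is that the generators $g_j$, and hence these invariants, depend \emph{holomorphically} on the complex coefficients parametrizing $\mathcal{A}_n$; therefore the solvable locus lies in a proper complex-analytic subset, which automatically has real codimension at least two. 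This is what distinguishes the statement from density: the exceptional set of Theorem~\ref{thm:density} is governed by the real condition that the subgroup generated by the $\lambda_j$ fail to be dense, and is only codimension one, whereas solvability is complex-analytic and costs codimension two. Note that the cruder codimension-one form of the present theorem already follows from Corollary~\ref{cor:general} combined with Theorem~\ref{thm:density}; reaching codimension two forces the direct route, since it must also cover foliations whose leaves are not dense, and it does not require the absence of extra algebraic leaves granted by Theorem~\ref{thm:no-alg}.

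Granting non-solvability, I would invoke the cited dynamical fact that a non-solvable subgroup of $\mathrm{Aut}(\mathbb{C},0)$ possesses infinitely many attracting fixed points $p_k\to 0$, with $h_k(p_k)=p_k$, $p_k\neq 0$ and $0<|h_k'(p_k)|<1$ for suitable $h_k\in G$. Realizing each $h_k$ as the holonomy $M_{\gamma_k}$ of a leafwise loop $\gamma_k$, Remark~\ref{rem:LC-fp} turns the isolated fixed point $p_k$ into a complex limit cycle $\delta_k$ whose multiplier $h_k'(p_k)\neq 1$, so the $\delta_k$ are genuine (non-identical) cycles. This already produces a countable family of complex limit cycles.

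The remaining, and most delicate, step is homological independence, which I would obtain from the linearized holonomy. For a fixed leaf $L$ and base point $q\in T\cap L$, the chain rule makes $\gamma\mapsto\log M_\gamma'(q)$ a homomorphism $\pi_1(L,q)\to\mathbb{C}/2\pi i\mathbb{Z}$, which therefore descends to a character $\rho_L\colon H_1(L)\to\mathbb{C}/2\pi i\mathbb{Z}$; as the multiplier is invariant under conjugation, $\rho_L([\delta_k])=\log h_k'(p_k)$ for each cycle $\delta_k$ lying on $L$. Hence any homology relation $\sum_k c_k[\delta_k]=0$ on a common leaf forces the multiplicative relation $\prod_k (h_k'(p_k))^{c_k}=1$. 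It then suffices to pass to a subsequence of the $h_k$ with multiplicatively independent multipliers, e.g. one for which the negative reals $\log|h_k'(p_k)|$ are linearly independent over $\mathbb{Q}$. Certifying that such a subsequence exists — that the attainable multipliers are not all trapped in a finite-dimensional $\mathbb{Q}$-subspace — together with the explicit identification of the holomorphic invariants that cut out the solvable locus, is where I expect the genuine work to concentrate.
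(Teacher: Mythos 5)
A point of orientation first: this paper does not prove \autoref{thm:LC} at all — it quotes it from \cite{Il78:en,SRO98,GK-BLC}, and its own machinery (\autoref{cor:general} combined with Theorems~\ref{thm:no-alg} and~\ref{thm:density}) recovers only the weaker statement with a codimension-\emph{one} exceptional set. So the benchmark for your proposal is the strategy of the cited works, which the paper summarizes in \autoref{sub:An} and \autoref{sec:indep}. Your first three steps do reconstruct that strategy: extend to ${\mathbb C}P^2$, pass to the monodromy group at infinity, argue that solvability is an analytic condition on the coefficients (hence real codimension at least two), and convert attracting fixed points of a non-solvable group into complex limit cycles via \autoref{rem:LC-fp}. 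Your genericity discussion is schematic — ``finitely many invariants'' is not justified, and one needs the structure theory of solvable subgroups of $\mathrm{Diff}({\mathbb C},0)$ to see that the solvability locus actually closes up into an analytic subset — but the idea is the right one.

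The genuine gap is the homological independence step, and it is precisely the issue that \autoref{sec:indep} of the paper is about. Your character argument is correct as far as it goes: linear holonomy gives a homomorphism $H_1(L)\to{\mathbb C}^*$ sending each cycle to its multiplier, so a relation $\sum_k c_k[\delta_k]=0$ with integer $c_k$ forces $\prod_k \mu_k^{c_k}=1$. But to exclude \emph{all} integer relations this way you need the numbers $\log|\mu_k|$ to be linearly independent over ${\mathbb Q}$, and nothing in your construction produces such a subsequence; you concede this yourself, and it is not a routine verification — it is the substance of the theorem. The cited proofs do not attempt it. Instead they construct representatives that are \emph{simple and pairwise disjoint}; then \autoref{prop:depend-01} reduces any homological dependence to one with coefficients in $\{-1,0,1\}$, and for bounded coefficients a rapidly decreasing sequence of multipliers, $|\mu_j|<|\mu_1\cdots\mu_{j-1}|<1$ as in \autoref{cor:cn-mu} (in \cite{Il78:en,SRO98}, growth estimates on integrals play the same role), finishes the argument. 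Such a sequence is cheap to arrange — compose with high powers of a contracting holonomy — whereas ${\mathbb Q}$-independence is not. So your proposal is missing one of two interchangeable ingredients: either the geometric construction of simple, pairwise disjoint representatives (the analogue of \autoref{lem:cn-simple-disjoint} in the paper's proof of its own \nameref{thm:main}), or a proof of multiplicative independence of the attainable multipliers. As written, the independence claim is unproven.
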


\autoref{cor:general} \atpage{cor:general} shows that a slightly weaker version of \autoref{thm:LC}, with codimension one instead of two, follows from Theorems \ref{thm:no-alg} and \ref{thm:density}. 

The following corollary of \nameref{thm:main} together with \autoref{thm:no-alg} implies a version of \autoref{thm:LC} with another exceptional set (thinner than the one provided by \autoref{cor:general}) of real codimension one.
\begin{corollary}
    Let ${\mathcal F}$ be a polynomial foliation of class ${\mathcal A}_n$.
    Suppose that ${\mathcal F}$ has $n+1$ distinct complex hyperbolic points at the line at infinity, and ${\mathcal F}$ has no algebraic leaves.
    Then it satisfies the assumptions of \nameref{thm:main}.
\end{corollary}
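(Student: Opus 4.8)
The plan is to produce the configuration required by the \nameref{thm:main} at one of the singular points on the line at infinity, and to use the absence of algebraic leaves to force the return that the theorem demands.

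First I would fix one of the $n+1$ singular points, $P=a_j\in L_\infty$, which is complex hyperbolic by hypothesis. By the Poincaré normalization quoted before the \nameref{thm:main}, ${\mathcal F}$ is linearizable near $a_j$, so in a linearization bidisc $U$ the two local separatrices of $a_j$ are the line at infinity $L_\infty$ and a second curve $S_j$, transversal to $L_\infty$, that enters ${\mathbb C}^2$. I set $L=S_j$, $L'=L_\infty$, and let ${\hat L}$ be the leaf containing $S_j$. Since $S_j\not\subset L_\infty$, the leaf ${\hat L}$ is different from $L_\infty$. It then remains to verify the hypothesis ${\hat L}\cap U\nsubseteq L\cup L'$ of the \nameref{thm:main}; by the assumption that ${\mathcal F}$ has no algebraic leaves besides $L_\infty$, a proof of this reduces to showing that ${\hat L}$ is \emph{not} algebraic.

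I would argue by contradiction, assuming ${\hat L}\cap U\subseteq S_j\cup L_\infty$. Two distinct leaves meet only at singular points, and $a_j$ is the only singular point in $U$, so ${\hat L}\cap L_\infty\subseteq\{a_j\}$; hence the assumption forces ${\hat L}\cap U=S_j$, i.e. the leaf enters $U$ only along its own separatrix and never returns. Because $U$ is open, the same holds for the closure: $\overline{{\hat L}}\cap U=S_j$, a smooth analytic curve in $U$. The goal is then to upgrade this local statement to the claim that the closure $\overline{{\hat L}}\subset{\mathbb C}P^2$ is a compact analytic curve; by Chow's theorem it would then be algebraic, so ${\hat L}$ would be an algebraic leaf distinct from $L_\infty$, contradicting the hypothesis and completing the proof.

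The heart of the matter, and the step I expect to be the main obstacle, is to show that a non-returning separatrix leaf must be algebraic. The hyperbolicity of $a_j$ gives one direction cheaply: the holonomy $\Delta_j$ around $a_j$ is a hyperbolic germ (as $\lambda_j\notin{\mathbb R}$) on a transversal to $L_\infty$ near $a_j$, so as soon as ${\hat L}$ has a single branch in $U$ off $S_j\cup L_\infty$, the iterates of $\Delta_j$ reproduce it into a countable family of branches accumulating on $L_\infty$ at $a_j$; thus one return already yields the countable family expected in the conclusion. The difficult direction is the global one: excluding that ${\hat L}$ accumulates on $L_\infty$ away from $a_j$, or on itself inside ${\mathbb C}^2$, while never meeting $U$ off $S_j\cup L_\infty$. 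Ruling this out is exactly what makes $\overline{{\hat L}}$ an honest analytic subvariety of ${\mathbb C}P^2$, and it is precisely the properness phenomenon underlying \autoref{thm:no-alg}; I would handle it by adapting that theorem's machinery to the single leaf ${\hat L}$ rather than by a purely local computation.
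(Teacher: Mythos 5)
Your overall strategy is the right contrapositive: the corollary reduces to showing that if the leaf ${\hat L}$ through the separatrix transversal to $L_\infty$ never returns to $U$ off the local separatrices, then ${\hat L}$ is algebraic, contradicting the hypothesis. But the step you yourself flag as ``the heart of the matter'' is a genuine gap, and the toolbox you propose for it is the wrong one. \autoref{thm:no-alg} is a \emph{genericity} statement --- its machinery (following Petrovski--Landis) is about showing that algebraic leaves disappear under generic perturbation, e.g.\ via degree bounds and parameter counting over the space of foliations; it says nothing about the dynamics of a single leaf of a fixed foliation, so ``adapting that theorem's machinery to the single leaf ${\hat L}$'' does not produce the properness/accumulation statement you need. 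Concretely, your Chow-theorem route requires excluding (i) accumulation of ${\hat L}$ on itself inside ${\mathbb C}^2$, (ii) accumulation on $L_\infty$ at non-singular points outside $U$, and (iii) accumulation only at the \emph{other} singular points $a_k$, $k\neq j$; none of these exclusions is carried out, and (ii)--(iii) in particular require a holonomy-propagation argument along $L_\infty$ and a local analysis at hyperbolic points that your sketch does not contain.

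The paper closes exactly this gap with a single citation rather than a reconstruction: by \cite[Lemma 28.10]{IYbook}, any non-algebraic leaf of such a foliation accumulates to \emph{every} non-singular point of the line at infinity. Since ${\hat L}$ is non-algebraic by hypothesis, it accumulates to non-singular points of $L_\infty$ inside $U$; points of ${\hat L}$ near such a point lie in $U$ but on neither local separatrix (they are off $L_\infty$ because distinct leaves are disjoint, and off the transversal separatrix because they are close to a point of $L_\infty\setminus\set{P}$), so ${\hat L}\cap U\nsubseteq L\cup L'$ and the \nameref{thm:main} applies. If you want a self-contained argument instead of the citation, what you must prove is precisely that lemma --- accumulation to $L_\infty$ somewhere (else $\overline{{\hat L}}$ is a compact curve in ${\mathbb C}^2$), then propagation of accumulation along $L_\infty$ by holonomy, then the hyperbolic local analysis at the $a_k$ --- not a variant of \autoref{thm:no-alg}.
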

\begin{proof}
    Let $P$ be a complex hyperbolic singular point of ${\mathcal F}$ on the line at infinity.
    Consider the separatrix of $P$ transversal to the line at infinity.
    This leaf is not algebraic, thus it accumulates to any non-singular point of the line at infinity, see \cite[Lemma 28.10]{IYbook}.
    In particular, it returns to an arbitrary neighborhood of $P$.
    Thus ${\mathcal F}$ satisfies the assumptions of \nameref{thm:main}.
\end{proof}

\subsection{Analytic foliations of \texorpdfstring{${\mathbb C}^2$}{ℂ²}}
Let ${\mathcal A}_\omega $ be the set of analytic vector fields in ${\mathbb C}^2$ with the topology of compact convergence.

The topology of the leaves of foliations given by generic vector fields $v\in {\mathcal A}_\omega $ was investigated in \cite{F06:en,K06:en} by T.~Firsova and T.~Golenishcheva--Kutuzova.
\begin{theorem}
    [\cite{F06:en}]
    \label{thm:F06}
    There exists a residual set ${\mathcal U}^F\subset {\mathcal A}_\omega $ such that for $v\in {\mathcal U}^F$, \emph{at most} countably many leaves of the foliation defined by $v$ are topological cylinders, and all other leaves are topological discs.
\end{theorem}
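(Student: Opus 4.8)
The plan is to single out a residual class of vector fields by a countable intersection of open dense conditions, and then to control the topology of \emph{every} leaf of the corresponding foliation by combining a holonomy count (for the ``at most countably many cylinders'' part) with a Morse-type analysis of a generic linear projection (for the ``all others are discs'' part). First I would reduce to a convenient open dense class: for $v$ in an open dense subset of $\mathcal{A}_\omega$ all singular points of $\mathcal{F}$ are non-degenerate with non-real ratio of eigenvalues, hence complex hyperbolic, so near each of them $\mathcal{F}$ is linearizable by the Poincaré normalization theorem; moreover, after a generic linear change of coordinates the projection $\pi\colon (z,w)\mapsto z$ is in general position with respect to $\mathcal{F}$, in the sense that the tangency locus $\{p : T_pL = \ker d\pi\}$ is a smooth curve, all tangencies are simple folds, and distinct tangency points carry distinct critical values. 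Each of these is open and dense, and I would organize them, together with the two genericity properties invoked below, as a countable intersection of open dense sets, so that the resulting set $\mathcal{U}^F$ is residual.

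Second, I would bound the number of leaves that are not topological discs. If a leaf $L$ is not a disc, it carries a non-contractible loop $\gamma$, whose holonomy germ $h_\gamma$ fixes the point $p=\gamma(0)$ on a transversal through $p$. The crucial generic property is the \emph{absence of identical cycles}: for $v$ in a residual set the holonomy pseudogroup admits no nontrivial relations, so $h_\gamma\neq\mathrm{id}$ whenever $\gamma$ is non-contractible, and then $\gamma$ is a complex limit cycle. In the correspondence of Remark~\ref{rem:LC-fp} such a loop produces an isolated fixed point of the holonomy map $h_\gamma$. Since $\mathbb{C}^2$ is second countable, the holonomy pseudogroup is generated by at most countably many germs (exhaust $\mathbb{C}^2$ by compact sets), so there are only countably many such maps, each with a discrete, hence countable, fixed-point set. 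Consequently the set of points of the transversal lying on non-disc leaves is countable, and so is the set of non-disc leaves. This already gives that all but countably many leaves are topological discs.

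It remains to show that each of these countably many non-disc leaves is exactly a cylinder, i.e.\ that no leaf has genus $\ge 1$ or more than one independent loop; this is the main obstacle. Here I would study a non-disc leaf $L$ through the restriction $g=\pi|_L\colon L\to\mathbb{C}$. Off the discrete critical-value set, $g$ is a local biholomorphism; its only critical points are the simple tangencies selected above; and the behaviour of $L$ at its ends is dictated by the linear normal forms at the complex hyperbolic singular points in $\overline{L}$, each of which contributes a single annular end modelled on the punctured separatrix $\mathbb{C}^{*}$. Reassembling $L$ from this data, an additional handle or a second independent loop would force either a coincidence of two critical values along $L$ or a second, independent near-singular return; both are codimension-$\ge 1$ analytic coincidences, which I would exclude on a residual set by a Baire-category argument run over the countable family of candidate cyclic leaves produced in the previous step. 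The genuine difficulty is that $g$ is non-proper: points of the fibre escape into the singularities and to the line at infinity, so one cannot invoke Riemann--Hurwitz directly. The genus bound and the count of ends must instead be established uniformly, via the local models at the singular points together with the general position of the tangency locus, rather than by a naive counting of sheets.
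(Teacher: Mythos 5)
A point of order first: the paper does not prove this statement at all — it is quoted as background from Firsova's work \cite{F06:en} (the paper's own contribution, \autoref{cor:analytic}, only \emph{uses} it). So there is no proof in the paper to compare yours against, and I can only assess your proposal on its own terms.

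On those terms there is a genuine gap, and it sits exactly where you locate it: the claim that every non-disc leaf is a cylinder. The countability half of your argument is fine in outline (countably many flow-box chains, hence countably many holonomy maps, each with isolated fixed points once identical cycles are excluded — although the lemma ``on a residual set no non-contractible loop has identical holonomy'' is itself a substantive density statement that you assert rather than prove). The cylinder half, however, cannot be closed with the tools you name. Since $\mathbb{C}^2$ contains no compact analytic curves, every leaf is an open Riemann surface, so its fundamental group is \emph{free}; consequently ``absence of nontrivial relations in the holonomy pseudogroup'' excludes nothing: a pair of pants or a once-punctured torus carries a free group of rank $2$ and imposes no relation whatsoever on the holonomy germs. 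The obstruction to higher topology must therefore come from an entirely different mechanism, and both of your candidates fail. Riemann--Hurwitz for $\pi|_L$ is unavailable because, as you admit, $\pi|_L$ is non-proper and you have no control of the fibres at the ends. And the ``Baire-category argument run over the countable family of candidate cyclic leaves'' is circular: that family depends on the vector field $v$, whereas a Baire argument in $\mathcal{A}_\omega$ must intersect open (or open dense) conditions indexed by data chosen \emph{independently} of $v$ — e.g.\ by combinatorial chains of rational flow boxes. As written, the set of those $v$ for which ``the $k$-th cyclic leaf of $v$ has no second handle'' is not even a well-defined subset of $\mathcal{A}_\omega$, let alone one whose openness and density you could check. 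Closing precisely this gap is the actual content of \cite{F06:en}, which proceeds by a different route (a Kupka--Smale-type genericity statement combined with an analysis of how the topology of $L$ is assembled from its intersections with an exhaustion of $\mathbb{C}^2$), not by a count of holonomy fixed points plus a projection argument.
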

In \cite{K06:en}, T.~Kutuzova proved that one can drop “at most” from the statement of this theorem, i.e., a generic foliation $v\in {\mathcal A}_\omega $ possesses countably many leaves homeomorphic to cylinders.

The following corollary improves Kutuzova's theorem.
\begin{corollary}
    [cf. \cite{K06:en}]
    \label{cor:analytic}
    There exists an open dense subset ${\mathcal U}^{LC}\subset {\mathcal A}_\omega $ such that for each $v\in {\mathcal U}^{LC}$, the foliation defined by $v$ possesses infinitely many homologically independent complex limit cycles.
\end{corollary}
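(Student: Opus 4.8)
The plan is to take for $\mathcal{U}^{LC}$ the set of $v\in\mathcal{A}_\omega$ whose foliation satisfies the hypotheses of the \nameref{thm:main}: that is, $v$ has a complex hyperbolic singular point $P$ such that the leaf $\hat L$ containing one of its local separatrices returns to a linearization neighborhood $U$ off the two local separatrices, $\hat L\cap U\nsubseteq L\cup L'$. By the \nameref{thm:main}, every such $v$ carries a countable set of homologically independent complex limit cycles, so it suffices to prove that this set is open and dense in $\mathcal{A}_\omega$. This simultaneously upgrades Kutuzova's residual set to an open dense one and replaces cylinder leaves (which may carry identical cycles) by genuine complex limit cycles.

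For openness I would check that each of the two hypotheses survives a perturbation that is small in the compact-convergence topology. A complex hyperbolic point is a nondegenerate zero, since its linearization has two nonzero eigenvalues and hence nonzero determinant; by the implicit function theorem it persists as a nearby zero $P$ depending analytically on $v$, and the eigenvalue ratio $\lambda$ varies continuously, so the open condition $\lambda\notin\mathbb{R}$ is preserved. For the return condition I would fix a transversal $T$ to the separatrix $L$ near $P$, meeting $L$ at a point $p_0$, together with a compact leafwise path $\gamma_0$ from $p_0$ to a witness point $q_0\in\hat L$ lying off $L\cup L'$ at positive distance. Continuous dependence of the solutions of the leafwise ODE on $v$ (the path being compact) provides, for $v$ near $v_0$, a holonomy map along a nearby path carrying the point $p\in L_P\cap T$ to a point $q$ near $q_0$; since the perturbed separatrices $L_P,L_P'$ move continuously and $q_0$ was at positive distance from $L\cup L'$, the image $q$ still avoids $L_P\cup L_P'$, so $\hat L_P$ again returns off its separatrices. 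By the remark following the \nameref{thm:main}, this conclusion is independent of the choice of linearization neighborhood, which removes the need to track the moving chart.

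For density I would use that the polynomial fields $\bigcup_n\mathcal{A}_n$ are dense in $\mathcal{A}_\omega$, since truncating the Taylor series of an entire field approximates it uniformly on compacts. Inside a fixed $\mathcal{A}_n$ with $n\geq 3$, the foliations with all leaves dense form a dense subset, because the exceptional set of \autoref{thm:density} has real codimension one, and having a complex hyperbolic singularity is itself an open dense condition. For a foliation with both properties, the leaf $\hat L$ containing a separatrix is dense and therefore visits $U\setminus(L\cup L')$, exactly as in the proof of \autoref{cor:general}; hence it lies in $\mathcal{U}^{LC}$. Thus $\mathcal{U}^{LC}$ is dense in each $\mathcal{A}_n$, hence in $\bigcup_n\mathcal{A}_n$, hence in $\mathcal{A}_\omega$.

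The main obstacle is the openness of the return condition rather than its density. One must verify that, as $v$ varies, the witness point $q$ both depends continuously on $v$ and stays uniformly off the \emph{simultaneously moving} local separatrices; this is precisely where continuous dependence of the holonomy in the compact-convergence topology, combined with the chart-independence supplied by the remark after the \nameref{thm:main}, does the work. Granting this, $\mathcal{U}^{LC}$ is open and dense, and every $v\in\mathcal{U}^{LC}$ possesses infinitely many homologically independent complex limit cycles by the \nameref{thm:main}.
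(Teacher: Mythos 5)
Your proposal is correct and takes essentially the same route as the paper: you define $\mathcal{U}^{LC}$ as the set of fields satisfying the hypotheses of the \nameref{thm:main}, prove openness by persistence of the complex hyperbolic point and of a compact returning leafwise path under perturbations small on a compact set, and prove density by approximating by polynomial fields and using genericity (\autoref{thm:density}, together with \autoref{cor:general}'s argument) inside each $\mathcal{A}_n$. The paper's proof is only terser at the openness step, compressing your continuity analysis into the choice of a compact set $K$ containing $\overline{U}\cup\beta([0,1])$.
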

This corollary together with \autoref{thm:F06} implies Kutuzova's theorem.
Indeed, for $v\in {\mathcal U}^{LC}$, the corresponding foliation possesses infinitely many homologically independent complex limit cycles.
On the other hand, for $v\in {\mathcal U}^F$, all leaves are either topological discs, or topological cylinders.
Thus for $v$ from the residual set ${\mathcal U}^{LC}\cap {\mathcal U}^F$, these limit cycles are located at distinct leaves, and these leaves are topological cylinders.
In particular, the corresponding foliation has infinitely many leaves homeomorphic to cylinders.

\begin{proof}
    [\hypertarget{cor:analytic:proof}{}Proof of \autoref{cor:analytic}]
    Let ${\mathcal U}^{LC}$ be the set of vector fields $v\in {\mathcal A}_\omega $ satisfying the assumptions of the \nameref{thm:main}.
    Let us prove that it is open and dense in ${\mathcal A}_\omega $.

    \paragraph{${\mathcal U}^{LC}$ is open in ${\mathcal A}_\omega $}\hypertarget{cor:analytic:open}{}
    Consider a foliation ${\mathcal F}$ that satisfies the assumptions of the \nameref{thm:main}.
    Let $P$, $U$, $L$, $L'$, ${\hat L}$ be as in the \nameref{thm:main}.
    Let $\beta :[0, 1]\to {\hat L}$ be a path joining a point of $L$ to a point of $({\hat L}\cap U)\setminus (L\cup L')$.
    Let $K$ be the closure of an $\eps $-neighborhood of ${\overline U}\cup \beta ([0, 1])$.
    Clearly, $K$ is a compact set, and a foliation ${\mathcal F}'$ close to ${\mathcal F}$ on $K$ satisfies the assumptions of the \nameref{thm:main}.
    Thus ${\mathcal U}^{LC}$ is open in ${\mathcal A}_\omega $.

    \paragraph{${\mathcal U}^{LC}$ is dense in ${\mathcal A}_\omega $}
    Note that for all $n$, ${\mathcal U}^{LC}$ includes a dense subset of ${\mathcal A}_n$, hence $\overline{{\mathcal U}^{LC}}\supset {\mathcal A}_n$, and the union of all ${\mathcal A}_n$ is dense in ${\mathcal A}_\omega $.
    Therefore, ${\mathcal U}^{LC}$ is dense in ${\mathcal A}_\omega $ as well.
\end{proof}

\section[Independence of Limit Cycles]{A hint for the homological independence of complex limit cycles}\label{sec:indep}
Our approach includes a very simple idea which yields a great simplification in the proofs concerning independence of complex limit cycles.

Both in  \cite{SRO98,Il78:en}, the authors used the following proposition to establish the independence of complex limit cycles.

\begin{proposition}
    \label{prop:depend-01}
    Consider a tuple of cycles $c_n$, $n=1,\dots ,N$ on a surface $L$ of real dimension $2$.
    Suppose that these cycles are simple (i.e., have no self-intersections) and pairwise disjoint.
    If $[c_n]\in H_1(L)$, $n=1,\dots ,N$ are linearly dependent, then there exists a tuple $\alpha _n\in \set{-1,0,1}$, $n=1,\dots ,N$, such that $\sum_{n=1}^N \alpha _n[c_n]=0\in H_1(L)$.
\end{proposition}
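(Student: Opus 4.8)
The plan is to cut $L$ along the curves and show that every homology relation among the $[c_n]$ is an integer combination of the boundaries of the complementary pieces, each of which is already a $\set{-1,0,1}$ relation. I assume $L$ is oriented and fix an orientation of each $c_n$; this costs nothing in the application, where $L$ is a leaf and hence a Riemann surface. Set $C=c_1\cup\dots\cup c_N$, which is a disjoint union of circles because the $c_n$ are simple and pairwise disjoint, and let $R_1,R_2,\dots$ be the connected components of $L\setminus C$.

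First I would translate the hypothesis into a bounding chain. Linear dependence of the classes $[c_n]$ means there is a nontrivial integer tuple $(n_i)$ with $\sum_i n_i[c_i]=0$ in $H_1(L)$; by definition of homology this says that the $1$-chain $\sum_i n_i c_i$ equals $\partial W$ for some finite, hence compactly supported, $2$-chain $W$. Since $\partial W$ is carried by $C$, the chain $W$ has a locally constant integer multiplicity on the open set $L\setminus C$; denote by $a_j$ its value on $R_j$. The crucial observation is that this multiplicity is compactly supported, so every region with $a_j\neq 0$ is precompact and $\overline{R_j}$ is a compact surface with boundary contained in $C$. Matching multiplicities across each curve shows that $W$ and the finite combination $\sum_{a_j\neq 0} a_j[\overline{R_j}]$ have the same boundary, namely $\sum_i n_i c_i$.

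Next I would compute the boundary of a single region. Each curve $c_i$ has two sides, and $\overline{R_j}$ may meet $c_i$ along both, one, or neither of them; if $R_j$ lies on both sides the two induced boundary circles are $c_i$ with opposite orientations and cancel, while otherwise $c_i$ occurs at most once. Hence $\partial[\overline{R_j}]=\sum_i\sigma_{ij}c_i$ with all $\sigma_{ij}\in\set{-1,0,1}$, and because this $1$-cycle literally bounds the compact surface $\overline{R_j}$ we obtain a relation $\sum_i\sigma_{ij}[c_i]=0$ in $H_1(L)$ with coefficients in $\set{-1,0,1}$ for every precompact region.

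It remains to see that one of these relations is nontrivial, and here the hypothesis enters. From the previous step, $\sum_i n_i c_i=\partial W=\sum_j a_j\,\partial[\overline{R_j}]$ as $1$-chains; if every $\partial[\overline{R_j}]$ were the zero chain this would force $\sum_i n_i c_i=0$, contradicting $(n_i)\neq 0$. Therefore some precompact region $R_{j_0}$ has $\partial[\overline{R_{j_0}}]\neq 0$, and the corresponding relation $\sum_i\sigma_{ij_0}[c_i]=0$ is the desired nontrivial dependence with coefficients in $\set{-1,0,1}$. The one genuine obstacle is the bookkeeping of the second paragraph on a possibly noncompact $L$: making rigorous that a finite $2$-chain $W$ with boundary on $C$ induces a compactly supported, locally constant multiplicity function and is therefore boundary-equivalent to a finite combination of precompact region classes. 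Once this is in place the remaining steps are elementary, the orientation-cancellation computation for $\partial[\overline{R_j}]$ being the only other point requiring care.
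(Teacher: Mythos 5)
First, a point of order: the paper does not prove this proposition at all. It is stated in \autoref{sec:indep} as the known tool from \cite{Il78:en,SRO98} on which both corollaries there rest, so there is no in-paper proof to compare yours against; I can only assess your argument on its own terms.

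Your argument is the standard proof of this fact and is essentially correct: realize a dependence $\sum_i n_i[c_i]=0$ by a compactly supported singular $2$-chain $W$, pass to its locally constant multiplicity function on $L\setminus C$, note that every region of nonzero multiplicity is precompact, and observe that the closure of such a region is a compact subsurface whose boundary is a $\{-1,0,1\}$-combination of the $c_i$ by orientation cancellation; your final nontriviality step is also handled correctly. Two refinements would make it airtight. First, the equality $\partial W=\sum_j a_j\,\partial[\overline{R_j}]$ should be asserted in $H_1(C)$ rather than at the chain level (a $2$-chain with vanishing multiplicities need not have vanishing boundary \emph{chain}, only a boundary nullhomologous in $C$); since $H_1(C)$ is free on the classes $[c_i]$, the coefficient identity $n_i=\sum_j a_j\sigma_{ij}$ and your nontriviality argument survive verbatim. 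Second, the step you flag as the genuine obstacle is standard: writing $\hat{R}_j$ for the completion of $R_j$ (the pieces obtained by cutting $L$ along $C$), excision gives $H_2(L,C)\cong\bigoplus_j H_2(\hat{R}_j,\partial\hat{R}_j)$, and $H_2(\hat{R}_j,\partial\hat{R}_j)$ is $\mathbb{Z}$ for compact $\hat{R}_j$ and $0$ otherwise — this is precisely your multiplicity decomposition. Finally, note that your orientability assumption is necessary, not merely convenient: on an open M\"obius band the core circle $c_1$ and a boundary-parallel circle $c_2$ are simple and disjoint with $[c_2]=2[c_1]$, a dependence admitting no nontrivial relation with coefficients in $\{-1,0,1\}$; so the proposition as literally stated needs orientability, which is harmless here since the leaves in question are Riemann surfaces.
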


\begin{corollary}
    Let ${\mathcal F}$ be an analytic foliation with singularities of the complex plane ${\mathbb C}^2$ with coordinates $(x, y)$.
    Let $(c_j)$ be a sequence of leafwise cycles of ${\mathcal F}$ such that
    \begin{enumerate}
        \item all cycles $c_j$ are simple and pairwise disjoint;
        \item the sequence
            \[
                I_j=\int_{c_j}x\,dy-y\,dx
            \]
            satisfies $|I_j|>|I_1|+\dots +|I_{j-1}|$.
    \end{enumerate}
    Then these cycles are homologically independent.
\end{corollary}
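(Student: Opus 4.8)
The plan is to reduce the homological independence to \autoref{prop:depend-01} by interpreting each $I_j$ as the value of a fixed linear functional on homology. First I would observe that the $1$-form $\omega = x\,dy - y\,dx$, when restricted to any leaf $L$, becomes a \emph{holomorphic} $1$-form: the coordinate functions $x$ and $y$ restrict to holomorphic functions on the Riemann surface $L$, so $x\,dy$ and $y\,dx$ restrict to holomorphic $1$-forms. Every holomorphic $1$-form on a Riemann surface is closed, hence $\omega|_L$ is closed. Consequently $\int_c \omega|_L$ depends only on the homology class $[c]\in H_1(L)$, and the map $I\colon H_1(L)\to\mathbb{C}$, $[c]\mapsto\int_c\omega$, is a group homomorphism. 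In particular $I_j = I([c_j])$ whenever $c_j$ lies on $L$.

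Next I would argue by contradiction. Suppose the cycles are \emph{not} homologically independent. By definition this means that on some leaf $L$ the subfamily $\{c_n : n\in S\}$ of those cycles lying on $L$ is linearly dependent in $H_1(L)$. These cycles are simple and pairwise disjoint by hypothesis~1, so \autoref{prop:depend-01} applies and yields coefficients $\alpha_n\in\{-1,0,1\}$, not all zero, with $\sum_{n\in S}\alpha_n[c_n]=0$ in $H_1(L)$.

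Applying the functional $I$ to this relation gives $\sum_{n\in S}\alpha_n I_n = 0$. I would then isolate the dominant term: let $m$ be the largest index in $S$ with $\alpha_m\neq 0$. Since $|\alpha_m|=1$ and $|\alpha_n|\le 1$,
\[
    |I_m| = \Bigl| \sum_{\substack{n\in S\\ n<m}} \alpha_n I_n \Bigr|
    \le \sum_{\substack{n\in S\\ n<m}} |I_n|
    \le |I_1| + \dots + |I_{m-1}|,
\]
which contradicts the assumption $|I_m| > |I_1| + \dots + |I_{m-1}|$ from hypothesis~2. Hence no such dependence exists, and the cycles are homologically independent.

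The argument is short, and the only real content is the first step, so the main thing to get right is the claim that $\omega|_L$ is closed. This is where I expect the (minor) obstacle to lie: one must note that although $\omega$ is \emph{not} closed on $\mathbb{C}^2$ (indeed $d\omega = 2\,dx\wedge dy\neq 0$), its restriction to a one-dimensional complex leaf automatically is, because a holomorphic $1$-form on a Riemann surface has no room to carry a nonzero differential. Everything after that is the standard dominant-coefficient trick, turning the geometric independence into the elementary numerical inequality supplied by the hypothesis.
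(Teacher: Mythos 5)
Your proof is correct and follows essentially the same route as the paper's: reduce to \autoref{prop:depend-01} and then contradict the growth condition on the $|I_j|$. The paper's own proof is just terser---it leaves implicit both the fact that the restriction of $x\,dy-y\,dx$ to a leaf is closed (so that $I_j$ depends only on $[c_j]\in H_1(L)$) and the dominant-index estimate, which are exactly the details you filled in.
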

\begin{proof}
    Suppose that the cycles $[c_n]$ are homologically dependent.
    Due to \autoref{prop:depend-01}, there exists  a tuple $\alpha _n\in \set{-1,0,1}$, $n=1,\dots ,N$, such that $\sum_{n=1}^N \alpha _n[c_n]=0$.
    Then $\sum_{n=1}^N \alpha _nI_n=0$ which contradicts the inequality $|I_j|>|I_1|+\dots +|I_{j-1}|$.
\end{proof}

The proofs in  \cite{SRO98,Il78:en} thus rely on the large estimates on integrals $I_i$.
In  \cite{GK-BLC}, we suggested to use the following corollary instead:
\begin{corollary}
    [{See \cite[Lemma 9]{GK-BLC}}]
    \label{cor:cn-mu}
    Let ${\mathcal F}$ be an analytic foliation with singularities of a two-dimensional closed complex manifold ${\mathcal M}$.
    Let $c_j$ be complex limit cycles of ${\mathcal F}$ such that
    \begin{enumerate}
        \item all cycles $c_j$ are simple and pairwise disjoint;
        \item their multipliers $\mu _j=\mu (c_j)$ satisfy $|\mu _j|<1$ and $0<|\mu _j|<|\mu _1\cdots \mu _{j-1}|$.
    \end{enumerate}
     Then these cycles are homologically independent.
\end{corollary}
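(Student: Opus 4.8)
The plan is to convert the multiplicative data carried by the multipliers into a single group homomorphism on $H_1(L)$ and then feed a hypothetical relation into \autoref{prop:depend-01}. First I would record that the multipliers organize into a homomorphism. Fix a leaf $L$ carrying some of the cycles, a point $p\in L$, and a cross-section $T$ at $p$. Holonomy along a leafwise loop based at $p$ returns to $L$ and therefore fixes $p$, so by the chain rule the assignment $\gamma\mapsto h_\gamma'(p)$ is a homomorphism $\pi_1(L,p)\to{\mathbb C}^*$. Since ${\mathbb C}^*$ is abelian, it descends to a homomorphism $\rho\colon H_1(L)\to{\mathbb C}^*$, and by the conjugacy-invariance of the multiplier (cf. \autoref{rem:LC-fp}) one has $\rho([c_j])=\mu_j$ for every cycle $c_j$ lying on $L$.

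Next I would argue by contradiction. Suppose that some subcollection of the $c_j$ lying on a common leaf $L$, say $c_{j_1},\dots,c_{j_k}$ with $j_1<\dots<j_k$, is linearly dependent in $H_1(L)$. These cycles are simple and pairwise disjoint by hypothesis~(1), so \autoref{prop:depend-01} supplies coefficients $\alpha_i\in\set{-1,0,1}$, not all zero, with $\sum_i\alpha_i[c_{j_i}]=0$ in $H_1(L)$. Applying the homomorphism $\rho$ turns this additive relation into the multiplicative one $\prod_i\mu_{j_i}^{\alpha_i}=1$. It is exactly the restriction to $\set{-1,0,1}$ provided by \autoref{prop:depend-01} that makes this usable; a relation with large integer coefficients could not be excluded by the growth hypothesis alone.

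Finally I would extract the contradiction from hypothesis~(2). Set $a_j=-\log|\mu_j|$; since $0<|\mu_j|<1$ we have $a_j>0$, and the inequality $|\mu_j|<|\mu_1\cdots\mu_{j-1}|$ reads $a_j>\sum_{k<j}a_k$, i.e. the sequence $(a_j)$ is super-increasing. Taking $-\log|\cdot|$ of $\prod_i\mu_{j_i}^{\alpha_i}=1$ gives $\sum_i\alpha_i a_{j_i}=0$. Let $m=j_k$ be the largest index with nonzero coefficient $\alpha_*\in\set{-1,1}$. Then, by the reverse triangle inequality and $|\alpha_i|\le1$,
\[
    \Bigl|\sum_i\alpha_i a_{j_i}\Bigr|\ge a_m-\sum_{j_i<m}a_{j_i}\ge a_m-\sum_{k<m}a_k>0,
\]
contradicting $\sum_i\alpha_i a_{j_i}=0$. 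Hence no such dependence exists, and for every leaf the cycles on it are linearly independent in its first homology, which is the asserted homological independence.

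I expect the main obstacle to be the first step: making precise that $[c_j]\mapsto\mu_j$ genuinely extends to a homomorphism on $H_1(L)$, which requires checking that the multiplier of a complex limit cycle really is the derivative of the holonomy at the base point of a representative loop, and that this value is insensitive to the choices of base point, representative, and cross-section. Once that identification is secured, the passage through \autoref{prop:depend-01} is immediate, and the magnitude bookkeeping in the last step is just the standard uniqueness property of super-increasing sequences.
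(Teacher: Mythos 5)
Your proof is correct and follows essentially the same route as the paper's: invoke \autoref{prop:depend-01} to get coefficients in $\set{-1,0,1}$, convert the homological relation into the multiplicative relation $\prod\mu_{j_i}^{\alpha_i}=1$, and contradict the super-increasing multiplier condition. The paper's own proof is just a two-line compression of this argument; your expansions (the homomorphism $\rho\colon H_1(L)\to{\mathbb C}^*$ and the $-\log|\cdot|$ bookkeeping) are exactly the details it leaves implicit.
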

\begin{proof}
    Suppose that there exists  a tuple $\alpha _n\in \set{-1,0,1}$, $n=1,\dots ,N$, such that $\sum_{n=1}^N \alpha _n[c_n]=0$.
    Then $\prod (\mu _{n})^{\alpha _n}=1$ which contradicts the inequalities  $|\mu _j|<1$ and $0<|\mu _j|<|\mu _1\cdots \mu _{j-1}|$.
\end{proof}
The calculations of the multipliers of complex limit cycles are much simpler than the estimates on $I_i$, especially if the complex limit cycles are constructed as fixed points of monodromy maps, see \autoref{rem:LC-fp} above.

\section{Proof of the \nameref{thm:main}}\label{sec:proof}
In the \autoref{sub:proof}, we shall prove the \nameref{thm:main} modulo two technical lemmas, then prove these lemmas in \autoref{sub:lemmas}.
\subsection{Proof modulo technical lemmas}\label{sub:proof}
Let ${\mathcal M}$, ${\mathcal F}$, $P$, $U$, $L$, $L'$, ${\hat L}$ be as in the \nameref{thm:main}.
Recall that ${\mathcal F}$ is given by $zdw=\lambda wdz$ in $U$, and $L$, $L'$ are given by $w=0$, $z=0$, respectively.
We shall identify $U$ with the corresponding coordinate space.
After shrinking $U$ and rescaling the coordinates, we may and will assume that $U=\{\,|z|<1, |w|<1\,\}$, and ${\mathcal F}$ is given by $zdw=\lambda wdz$ in some neighborhood of ${\overline U}$.
Without loss of generality, we may and will assume that $\Im \lambda >0$, otherwise we pass to the complex conjugate coordinates.

Due to assumptions of the \nameref{thm:main}, ${\hat L}\cap {\overline U}\nsubseteq L\cup L'$.
Let $q$ be a point of $({\hat L}\cap {\overline U})\setminus (L\cup L')$.
Let $O$ be its projection to $L$.
Note that $O, q\in {\hat L}$.
Join $O$ to $q$ by a leafwise path $\beta :[0, 1]\to {\hat L}$.

We shall postpone the proof of the following lemma to \autoref{sub:lemmas}, see \autoref{fig:lem-beta}.
\begin{lemma}
    \label{lem:choice-beta}
    We can shrink $U$, rescale $z$, $w$, and choose a smooth path $\beta :[0, 1]\to {\hat L}$ without self-intersections so that
    \begin{itemize}
        \item $q=\beta (1)=(1, w_0)$, $|w_0|<1$;
        \item $\beta (t)\notin {\overline U}$ for $0<t<1$.
    \end{itemize}
\end{lemma}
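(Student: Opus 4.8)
The plan is to produce the required path in three pieces: first normalize the return point onto the distinguished face $\{\,|z|=1\,\}$, then realize the start and the end as short leafwise segments that leave $\overline U$ through that face, and finally connect them by a single excursion kept outside $\overline U$.

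The main computational tool is the explicit local integral of $z\,dw=\lambda w\,dz$, namely $w=Cz^{\lambda}$ with $C\neq 0$ on the local leaf through $q_0$. Writing $z=e^{\zeta}$ with $\Re\zeta=\log|z|$, one has $\log|w|=\log|C|+\Re\lambda\cdot\Re\zeta-\Im\lambda\cdot\Im\zeta$. Since $\Im\lambda>0$, by letting $\arg z=\Im\zeta$ grow (that is, by spiralling) while driving $\Re\zeta$ to $0$, I can steer the local leaf of $q_0$ to a boundary point with $|z|=1$ and $|w|$ as small as I wish. After a unit-modulus rescaling $z\mapsto z/z_1$, which preserves both the equation $z\,dw=\lambda w\,dz$ and the bidisc, I may assume this point is exactly $q=(1,w_0)$ with $0<|w_0|<1$; a further rescaling of $w$ together with a shrinking of $U$ makes $|w_0|$ as small as needed. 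I then set $O=(1,0)$, the projection of $q$ to $L$.

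Both endpoints are made to leave $\overline U$ through the face $\{\,z=1\,\}$ on the side $|z|>1$, where the linearizing coordinates are still valid: from $O=(1,0)$ I follow the separatrix branch $\{w=0\}\subset\hat L$ to a point $O^{+}$ with $|z|$ slightly larger than $1$, and I reach $q=(1,w_0)$ along its own local leaf $w=w_0z^{\lambda}$ from a point $q^{+}$ with $|z|$ slightly larger than $1$; both short segments lie outside $\overline U$ except at $O$ and $q$. Because the local component of $\hat L$ through $q_0$ avoids $\{w=0\}$, the points $O^{+},q^{+}$ and the separatrix lie on the same leaf but in different local sheets, so $\hat L$ genuinely exits $\overline U$, and connectivity of $\hat L$ supplies a leafwise path joining $O^{+}$ to $q^{+}$.

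The hard part will be trimming this connecting path so that its interior stays strictly outside $\overline U$, which is exactly the requirement $\beta(t)\notin\overline U$ for $0<t<1$. My approach is a first-exit/first-return bookkeeping: following the separatrix branch out of $U$, I take the first return of $\hat L$ to $\overline U$, discarding returns that hug the separatrices by shrinking $U$ (the leaves accumulate on $P$, so near-separatrix passages occur in arbitrarily small neighborhoods), and I reroute the first and last boundary contacts onto the face $\{\,z=1\,\}$ using the integral $w=Cz^{\lambda}$ and the $\Im\lambda>0$ steering above. Once the interior lies outside the original $\overline U$, it remains outside any smaller bidisc, so the final shrinking does no harm. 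Smoothness and the absence of self-intersections are then arranged by a general-position perturbation supported in the interior, fixing the endpoints and staying outside $\overline U$. I expect the bookkeeping for the first return — guaranteeing a single clean excursion rather than several, and that it lands on the face $\{\,z=1\,\}$ — to be the principal obstacle.
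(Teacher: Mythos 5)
There is a genuine gap, and it sits exactly where you predict it (``the bookkeeping for the first return \dots\ the principal obstacle''): you never produce a mechanism for landing the path on the face $\{z=1\}$ that is compatible with keeping its interior outside $\overline U$. The only steering you describe --- drive $\Re\zeta$ to $0$ while $\Im\zeta$ grows, so $|z|$ climbs to $1$ and $|w|$ drops --- traces a curve that lies \emph{inside} $\overline U$ at every moment ($|z|\le 1$ and $|w|\le 1$ throughout). Whenever this steering has to be part of the path --- and it does: after you cut the connecting path from $O^{+}$ to $q^{+}$ at its first entry into $\overline U$, the entry point is an arbitrary point of $\partial U$, typically on the face $|w|=1$, and must then be rerouted to the face $\{z=1\}$ --- the interior of $\beta$ meets $\overline U$. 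Your escape clause, ``once the interior lies outside the original $\overline U$, it remains outside any smaller bidisc, so the final shrinking does no harm,'' therefore has a false premise: the rerouted portion is never outside the original $\overline U$, and passive shrinking cannot repair it. Indeed, your steering ends on the face $|z|=1$ of the \emph{original} bidisc; if you shrink the $z$-radius to $r<1$, that endpoint falls strictly outside the new bidisc (so it is no longer a point of the cross-section face), while stopping the spiral at the first moment $|z|=r$ either leaves earlier points of the spiral inside the new bidisc or forces the endpoint into the corner $|z|=r$, $|w|=s$, contradicting $|w_0|<1$. The plan is circular: normalize the endpoint, connect, trim --- and the trimming destroys the normalization. (Also, ``discarding returns that hug the separatrices by shrinking $U$'' is not a valid move; the paper handles such returns by deforming the path inside the leaf $\hat L$ to avoid $L\cup L'$, not by shrinking.)

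The paper breaks this circle by reversing the roles of steering and shrinking. It first takes a path from $(1,0)$ to its \emph{first} return to $\overline U\setminus(L\cup L')$, with interior avoiding $\overline U$ (cut and deform); the return point $(\tilde z,\tilde w)$ is wherever it happens to be. If $|\tilde z|=1$, a rotation finishes the proof. If $|\tilde w|=1$, it appends the leafwise curve $\tau\mapsto(\tilde z e^{\tau\kappa},\tilde w e^{\lambda\tau\kappa})$ with $\kappa$ chosen so that $\Re\kappa<0$ \emph{and} $\Re(\lambda\kappa)<0$ (possible precisely because $\lambda\notin\mathbb{R}$), so that \emph{both} $|z|$ and $|w|$ strictly decrease. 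This curve does run inside the original $\overline U$, but one then \emph{shrinks} the bidisc to $U'=\{\,|z|\le|e^{\kappa}\tilde z|,\ |w|\le 1\,\}$: the appended curve stays outside $U'$ until its very endpoint, which lands on the vertical face of $U'$ with $|w|<1$, and the previously constructed part of the path, being outside $\overline U\supset U'$, is unaffected; this reduces everything to the first case. So the shrinking is not an afterthought that ``does no harm'': it is the active step that turns an inward steering curve into an admissible final segment of $\beta$. Your argument can be repaired along these lines (replace the outward steering toward $|z|=1$ by the inward steering above, and shrink $U$ to meet its endpoint), but as written it does not close.
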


\begin{figure}[h]
    \centering
    \begin{tikzpicture}
        \coordinate (Q) at (1, 0.8);
        \drawU;
        \draw (0, 0) -- (0.3, 0.45) -- (0, 1) -- (-0.3, 0.55) node[right] {$L'$} -- cycle;
        \draw
            (AR) .. controls (AR -| {4,0}) and (4, 1.2) .. (Q) node[above left,near start] {${\hat L}$}
            (BR) .. controls (BR -| {2,0}) and (3, 0.5) .. (Q);
        \draw[beta]
            (1, 0) .. controls (3.5, 0) and (Q -| {3,0}) .. (Q) node[midway,right] {$\beta $}
            ;
    \end{tikzpicture}
    \caption{Domain $U$ and path $\beta $ provided by \autoref{lem:choice-beta}.}
    \label{fig:lem-beta}
\end{figure}

From now on, let $\beta $ be a leafwise path satisfying assertions of \autoref{lem:choice-beta}.
Consider the cross-section $T=\set{(z, w)|z=1}\subset U$.
Let $\alpha :[0, 1]\to L$ be the unit circle making one turn around $P$ in $L\setminus \set{P}$, $\alpha (t)=\left(e^{2\pi it}, 0\right)$.
Then $M_\alpha (w)=\nu w$, where $\nu =e^{2\pi i\lambda }$.
Recall that $\Im \lambda >0$, hence $|\nu |<1$.

Let $D$ be a small closed disc $O\in D\subset T$ such that the holonomy map $M_\beta :D\to T$ along $\beta $ is well-defined.
All the holonomy maps $M_n=M_\alpha ^n\circ M_\beta =\nu ^nM_\beta $ are defined in $D$.
For $n$ large enough, $M_n$ contracts in $D$, and $M_n(D)\subset D$.
Let $p_n$ be the unique fixed point of $M_n$ in $D$, $M_n(p_n)=p_n$.

Let $[c_n]$ be the complex limit cycle corresponding to $p_n$, see \autoref{rem:LC-fp}.
Let $\mu _n$ be the multiplier of $c_n$.
Since $|\mu _n|=|\nu |^n\times |M_\beta '(p_n)|\leq |\nu |^n\times \max_{w\in D}|M_\beta '(w)|\to 0$, we can choose a sequence $(n_k)$ such that $|\mu _{n_0}|<1$, and $|\mu _{n_{k+1}}|<|\mu _{n_0}\times \dots \times \mu _{n_k}|$.
Finally, if the cycles $[c_n]$ have simple and pairwise disjoint representatives $c_n$, then \autoref{cor:cn-mu} implies that the cycles $[c_{n_k}]$ are homologically independent.
Thus the following lemma completes the proof of the \nameref{thm:main}.

\begin{lemma}
    \label{lem:cn-simple-disjoint}
    In the settings introduced above, one can choose $D$ and representatives $c_n$ so that the cycles $c_n$ with sufficiently large indices are simple and pairwise disjoint.
\end{lemma}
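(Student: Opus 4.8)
The plan is to describe each $c_n$ explicitly as the concatenation of two arcs and to treat them separately. Writing $w_n^\ast=M_\beta(p_n)$ and identifying $T=\{z=1\}$ with its $w$-coordinate, the fixed-point relation $M_n(p_n)=\nu^n M_\beta(p_n)=p_n$ reads $p_n=\nu^n w_n^\ast$. Since $p_n\to O=(1,0)$, we get $w_n^\ast=M_\beta(p_n)\to M_\beta(O)=q=(1,w_0)$ with $w_0\ne0$, while $p_n=\nu^n w_n^\ast\to0$. The cycle $c_n$ starts at $p_n$, follows the lift $\beta_n$ of $\beta$ to the point $(1,w_n^\ast)$, and then follows the lift $\gamma_n$ of $\alpha^n$ back to $p_n$. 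In the linearizing coordinates the leaf through $(1,w_n^\ast)$ is $w=w_n^\ast z^\lambda$, so $\gamma_n(\theta)=(e^{2\pi i\theta},\,w_n^\ast e^{2\pi i\lambda\theta})$, $\theta\in[0,n]$. Because $\Im\lambda>0$, the modulus $|w_n^\ast e^{2\pi i\lambda\theta}|=|w_n^\ast|e^{-2\pi(\Im\lambda)\theta}$ is nonincreasing and $<1$, so $\gamma_n$ lies on the wall $\{|z|=1,\ |w|<1\}\subset\partial\overline{U}$, meets $T$ exactly in the $n+1$ points $\nu^j w_n^\ast$, $0\le j\le n$, and spirals inward toward $O$.

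For the spirals everything is explicit. If $\gamma_n(\theta)=\gamma_m(\theta')$, then $e^{2\pi i\theta}=e^{2\pi i\theta'}$ forces $\theta-\theta'=k\in\mathbb Z$, and the second coordinates give $\nu^k=w_m^\ast/w_n^\ast$. For $n=m$ and $k\ne0$ this is impossible, since $|\nu|<1$ gives $\nu^k\ne1$; hence each $\gamma_n$ is embedded. For $n\ne m$, first note $w_n^\ast\ne w_m^\ast$: otherwise $w_n^\ast=M_\beta(\nu^n w_n^\ast)$ and $w_m^\ast=M_\beta(\nu^m w_m^\ast)$ together with injectivity of $M_\beta$ would give $\nu^n=\nu^m$, a contradiction. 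This excludes $k=0$. For $k\ne0$ the modulus $|\nu|^k$ stays bounded away from $1$, whereas $w_m^\ast/w_n^\ast\to1$; so for all large $n,m$ the equation $\nu^k=w_m^\ast/w_n^\ast$ has no integer solution. Thus the spirals with large indices are pairwise disjoint, and the same computation shows their trace sets $\{\nu^j w_n^\ast\}$ on $T$ are pairwise disjoint.

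The arcs $\beta_n$ are the holonomy transports of the embedded path $\beta$ from \autoref{lem:choice-beta} through the distinct points $p_n$. Covering the compact arc $\beta([0,1])$ by a foliated product chart (flowbox) $V$ whose transversal near $O$ lies in $T$, distinct points of $T\cap V$ lie on distinct plaques, so the $\beta_n$ lie in pairwise disjoint plaques and are pairwise disjoint; each $\beta_n$ is embedded and $C^0$-close to $\beta$, hence (after shrinking $D$) $\beta_n\setminus\{p_n,w_n^\ast\}$ stays outside $\overline{U}$ and inside $V$. For the cross terms $\beta_n\cap\gamma_m$, the interior of $\beta_n$ lies outside $\overline{U}$ while $\gamma_m\subset\partial\overline{U}$, so they can meet only at the endpoints $p_n,w_n^\ast$ of $\beta_n$; but by the modulus argument these do not lie in $\gamma_m\cap T=\{\nu^i w_m^\ast\}$ for large $n\ne m$. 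Finally, at each junction $p_n,w_n^\ast$ the arc $\beta_n$ enters from outside $\overline{U}$ and $\gamma_n$ runs along the wall, so they meet cleanly and $c_n=\beta_n\ast\gamma_n$ is a simple closed curve; combined with the pairwise disjointness of the $\beta_n$, of the $\gamma_n$, and of the cross terms, this yields the lemma.

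The heart of the matter is that every estimate inside $U$ is governed by the single inequality $|\nu|<1$, which keeps $|\nu|^k$ bounded away from $1$ for $k\ne0$ and thereby separates the spirals both from themselves and from one another. The only genuinely geometric (rather than computational) point is the disjointness of the arcs $\beta_n$, which all accumulate onto the common path $\beta$ and could a priori cross; this is precisely what the product structure along the embedded $\beta$ from \autoref{lem:choice-beta} resolves. I expect the most delicate part of the writing to be not any deep idea but the bookkeeping that matches the two families of arcs cleanly at their shared endpoints on $T$ and confirms that each lift stays in its intended region.
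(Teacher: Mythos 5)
Your proposal is correct and follows essentially the same route as the paper's own proof: the same decomposition of $c_n$ into the lift of $\beta$ (kept outside $\overline U$) plus the spiral lift of $\alpha^n$ inside $\overline U$, and the same key computation where $e^{2\pi it}=e^{2\pi is}$ forces an integer shift $k$ and the modulus bound ($|\nu^k|$ bounded away from $1$ while the ratio $M_\beta(p_m)/M_\beta(p_n)$ is close to $1$) forces $k=0$. The only cosmetic difference is that the paper packages this last estimate as a condition on $D$ (inequality \eqref{eqn:D-small}) rather than as an asymptotic statement in $n,m$, which is equivalent given that the lemma only concerns sufficiently large indices.
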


\subsection{Proofs of technical lemmas}\label{sub:lemmas}
\begin{proof}
    [Proof of \autoref{lem:choice-beta}]
    \phantomsection\label{proof:lem:choice-beta}
    Take a leafwise path $\tilde\beta :[0, 1]\to {\hat L}$ joining $(1, 0)\in L$ to a point of $({\hat L}\cap \partial U)\setminus (L\cup L')$.
    Without loss of generality, we may and will assume that $\tilde\beta (0)=(1, 0)$, and $\tilde\beta (t)\notin {\overline U}$ for $0<t<1$.
    Indeed, otherwise we cut $\tilde\beta $ to its first return to ${\overline U}\setminus (L\cup L')$, and deform it to avoid intersections with $L\cup L'\subset U$.

    Put $\tilde\beta (1)=({\tilde z}, {\tilde w})$, then $|{\tilde z}|=1$ (“vertical” component of $\partial U$) or $|{\tilde w}|=1$ (“horizontal” component of $\partial U$).

    \begin{tcases}
        \item[$|{\tilde z}|=1$, $|{\tilde w}|<1$]
            \label{case:wlt1}
            After rotation of $z$-coordinate, we may and will assume that ${\tilde z}=1$.
            We modify an initial segment of $\tilde\beta $ so that it starts at $(1, 0)$, and immediately leaves ${\overline U}$.
            This yields $\beta $ with desired properties.
        \item[$|{\tilde w}|=1$]
            The leaf ${\hat L}$ near $\tilde\beta (1)=({\tilde z}, {\tilde w})$ is given by $z={\tilde z}e^t$, $w={\tilde w}e^{\lambda t}$, $t\in ({\mathbb C}, 0)$.
            Let us append to $\tilde\beta $ a curve of the form $\beta _\kappa =({\tilde z}e^{\tau \kappa }, {\tilde w}e^{\lambda \tau \kappa })$, $\tau \in [0, 1]$, see \autoref{fig:case-II-I}.

            Since $\lambda \notin {\mathbb R}$, we can choose $\kappa $ so that $\Re \kappa <0$ and $\Re (\lambda \kappa )<0$, hence both $|{\tilde z}e^{\tau \kappa }|$ and $|{\tilde w}e^{\lambda \tau \kappa }|$ decrease as $\tau $ increases.
            Let us shrink $U$ to $U'=\{\,|z|\leq |e^\kappa {\tilde z}|, |w|\leq 1\,\}$, and scale the coordinates appropriately.
            Then $[(|e^\kappa {\tilde z}|, 0), (1, 0)]\cup \tilde\beta \cup \beta _\kappa $ and $U'$ satisfy assumptions of \autoref{case:wlt1}.
            This case is thus reduced to \autoref{case:wlt1} above.
    \end{tcases}
\end{proof}
\begin{figure}
    \centering
    \begin{tikzpicture}
        \coordinate (Q) at (0.8, 1);
        \drawU;

        \begin{scope}
            [thin]
            \path[U']
                (1, 0) coordinate (U'r)
                (-1, 0) coordinate (U'l)
                ;
            \draw
                (0, 1) circle[U',horizontal arc]
                (U'r |- {0,-1}) -- (U'r |- A)
                (U'r) -- (U'r |- {0,1})
                (U'l |- {0,-1}) -- (U'l |- A)
                (U'l) -- (U'l |- {0,1})
                ;
            \draw[invisible]
                (U'r |- A) -- (U'r)
                (U'l |- A) -- (U'l)
                ;
            \draw[shift={(0,-1)},U'] (-1, 0) arc[bottom arc];
            \draw[invisible,shift={(0,-1)},U'] (1, 0) arc[upper arc];
            \node[right] at (U'l |- {0,-0.7}) {$U'$};
        \end{scope}

        \draw (0, 0) -- (0.3, 0.45) -- (0, 1) -- (-0.3, 0.55) node[right] {$L'$} -- cycle;
        \draw
            (AR)
                .. controls (AR -| {4,0})  and (4, 0.7) .. (3, 1.2) node[above left,near start] {${\hat L}$}
                .. controls (2, 1.7) and (1, 1.5) .. (Q)
            (BR)
                .. controls (BR -| {2,0}) and (2.5, 0.7) .. (2, 1)
                .. controls (1.5, 1.3) and (1, 1.2) .. (Q);
        \draw[beta]
            (U'r) -- (1.75, 0)
                .. controls (3, 0) and (3, 0.7) .. (2.5, 1) node[midway,below right] {$\tilde\beta $}
                .. controls (2, 1.3) and (1, 1.4) .. (Q)
                .. controls (Q |- {0,0.8}) .. (0.5, 0.7) node[midway,below right] {$\beta _\kappa $}
                coordinate (BKE);
        \fill
            (BKE) circle (2pt)
            (U'r) circle (2pt)
            ;
    \end{tikzpicture}
    \caption{Reduction of Case II to Case I in the \hyperref[proof:lem:choice-beta]{proof} of \autoref{lem:choice-beta}.}
    \label{fig:case-II-I}.
\end{figure}

\begin{proof}
    [Proof of \autoref{lem:cn-simple-disjoint}]
    Recall that $\beta $ satisfies assertions of \autoref{lem:choice-beta}.
    Let us choose lifts $\gamma _\beta [w]$, $w\in (T, O)$ of $\beta $ to nearby leaves of ${\mathcal F}$ such that
    \begin{itemize}
        \item $\gamma _\beta [w]$ is a simple path joining $(1, w)$ to $(1, M_\beta (w))$;
        \item the paths $\gamma _\beta $ are pairwise disjoint;
        \item $\gamma _\beta [w](t)\notin {\overline U}$ for $0<t<1$.
    \end{itemize}
    The lifts will satisfy the first two assumptions almost automatically, and we can choose them to satisfy the last assumption, because $\beta (t)\notin {\overline U}$ for $0<t<1$. 

    Next, we choose $D$, $O\in D\subset T$ such that 
    \begin{itemize}
        \item for $w\in D$, $\gamma _\beta [w]$ is defined and satisfies the assumptions listed above;
        \item $M_\beta $ is univalent in $D$;
        \item for $w, w'\in D$ we have
            \begin{equation}
                \label{eqn:D-small}
                |\nu |<\left|\frac{M_\beta (w)}{M_\beta (w')}\right|<|\nu |^{-1}.
            \end{equation}
    \end{itemize}

    Let $c_n$ be the union of the curve $\gamma _\beta [p_n]$ and the spiral $\gamma _{\alpha ^n}[M_\beta (p_n)]$ joining $M_\beta (p_n)$ to $\nu ^nM_\beta (p_n)=p_n$, where $\gamma _{\alpha ^n}[w]$ is given by
    \begin{align}
        \label{eqn:gamma-alpha-n}
        \gamma _{\alpha ^n}[w](t)&=\left(e^{2\pi it}, we^{2\pi i\lambda t}\right), &t&\in [0, n].
    \end{align}
    Clearly, $c_n$ is a representative of $[c_n]$.
    Let us prove that these representatives are simple and pairwise disjoint.

    For $m\neq n$ we have $\frac{M_\beta (p_n)}{p_n}=\nu ^n\neq \nu ^m=\frac{M_\beta (p_m)}{p_m}$, hence $p_n\neq p_m$.
    Therefore, the paths $\gamma _\beta [p_n]$, $n>N$, are simple and pairwise disjoint.
    The bidisc ${\overline U}$ includes all the spirals \eqref{eqn:gamma-alpha-n}, so the spirals $\gamma _{\alpha ^n}[M_\beta (p_n)]$ cannot intersect the paths $\left.\gamma _\beta [w]\right|_{(0, 1)}$.
    Thus it is enough to prove that the spirals $\gamma _{\alpha ^n}[M_\beta (p_n)]$ are simple pairwise disjoint paths.

    Consider two pairs $(n, t)$, $t\in [0, n]$, and $(m, s)$, $s\in [0, m]$, such that the corresponding points of the spirals $\gamma _{\alpha ^n}[M_\beta (p_n)]$, $\gamma _{\alpha ^m}[M_\beta (p_m)]$ coincide,
    \[
        \left( e^{2\pi it}, e^{2\pi i\lambda t}M_\beta (p_n) \right) = \left( e^{2\pi is}, e^{2\pi i\lambda s}M_\beta (p_m) \right).
    \]
    Since $e^{2\pi it}=e^{2\pi is}$, the difference $k=t-s$ is integer.
    Then $M_\beta (p_m)=e^{2\pi i\lambda (t-s)}M_\beta (p_n)=\nu ^kM_\beta (p_n)$, thus $\frac{M_\beta (p_m)}{M_\beta (p_n)}=\nu ^k$.
    Due to \eqref{eqn:D-small}, this is possible only for $k=0$, hence $t=s$, and $M_\beta (p_m)=M_\beta (p_n)$.
    Recall that $M_\beta $ is univalent in $D$, therefore $p_m=p_n$.
    Since $p_m\neq p_n$ for $m\neq n$, we have $m=n$.
    Finally, $(n, t)=(m, s)$, so the spirals $\gamma _{\alpha ^n}[M_\beta (p_n)]$ are simple and pairwise disjoint.
\end{proof}
\section*{Acknowledgements}
We started investigating complex limit cycles during our visit to Mexico City and Cuernavaca in 2014.
We are very grateful to UNAM (Mexico) and HSE (Moscow) for supporting this visit.
We are grateful to Laura Ortiz\footnote{\MRauthor{356784}} and Alberto Verjovsky\footnote{\orcid{0000-0001-6631-9637}, \MRauthor{177975}} for the invitation and fruitful discussions during this visit.

We are also grateful to our science advisor Yulij Ilyashenko\footnote{\orcid{0000-0003-1087-5903}, \MRauthor{190226}} for permanent encouragement, and to Victor Kleptsyn\footnote{\MRauthor{751650}} for useful remarks and suggestions.
\printbibliography
\end{document}